\providecommand{\U}[1]{\protect\rule{.1in}{.1in}}
\newtheorem{theorem}{Theorem}
\newtheorem*{theorem*}{Theorem}
\newtheorem{lemma}[theorem]{Lemma}
\newtheorem{proposition}[theorem]{Proposition}
\newtheorem{definition}[theorem]{Definition}
\newtheorem{notation}[theorem]{Notation}
\newtheorem{algorithm}[theorem]{Algorithm}
\newtheorem{example}[theorem]{Example}
\newtheorem{remark}[theorem]{Remark}
\newtheorem{problem}[theorem]{Problem}
\DeclareMathOperator{\sign}{sign}
\DeclareMathOperator{\tdeg}{taildeg}
\DeclareMathOperator{\diag}{diag}
\newcommand{\Ay}{A_{\rm{sym}}}
\newcommand{\Cy}{C_{\rm{sym}}}
\newcommand{\EC}{{\rm{EC}}}
\newcommand{\GEC}{{\rm{GEC}}}
\newcommand\xqed[1]{%
  \leavevmode\unskip\penalty9999 \hbox{}\nobreak\hfill
  \quad\hbox{#1}}
\newcommand\exend{\xqed{$\triangle$}}
\author[1]{Hoon Hong\thanks{hong@ncsu.edu}}
\author[1]{Daniel Profili\thanks{daprofil@ncsu.edu, corresponding author}}
\author[2]{J. Rafael Sendra\thanks{jrafael.sendra@cunef.edu}}
\affil[1]{Department of Mathematics, North Carolina State University, USA}
\affil[2]{Department of Mathematics, CUNEF Universidad, Spain}
\begin{document}

\title{Conditions for eigenvalue configuration \\ of two real symmetric matrices \\ (Symmetric polynomial approach)\thanks{A special case of the result here, without proofs, was presented as a poster at ISSAC 2024 with an abstract in \cite{issac} }}
\maketitle

\begin{abstract}
Given two real symmetric matrices, their eigenvalue configuration is the
relative arrangement of their eigenvalues on the real line.
In this paper, we consider the following problem: given two parametric real symmetric matrices and an eigenvalue configuration, find a
simple condition on the parameters such that their eigenvalues have
the given configuration.
In this paper, we give an algorithm which expresses the eigenvalue configuration problem as a real root counting problem of certain symmetric polynomials, whose roots can be counted using the Fundamental Theorem of Symmetric Polynomials and Descartes' rule of signs.
\end{abstract}






\section{Introduction}
For two real symmetric matrices, their eigenvalue configuration is the relative arrangement of their eigenvalues on the real line.
In this paper, we consider the \textit{eigenvalue configuration problem}: given two parametric real symmetric matrices $F$ and $G$ and an eigenvalue
configuration, produce quantifier-free conditions on the entries of
$F$ and $G$ so that their eigenvalues are arranged in the given way. For an alternative solution to the same problem, see our related work \cite{signature}.

A fundamental problem in computational algebra and geometry, called the real root counting problem, is to 
determine
a quantifier-free condition on the coefficients of a
polynomial such that its roots lie in a given subset of the plane. This is a very general problem which appears in many different areas, including algebraic geometry \cite{Furchi2025}, complex analysis \cite{gati2023degree6hyperbolicpolynomials}, and graph theory \cite{bartzos2021newupperboundsnumber}, among others.
The eigenvalue configuration problem is a special case of this problem where we desire to count the roots of one polynomial that lie within intervals determined by the roots of another polynomial.
In particular, the eigenvalue configuration problem generalizes Descartes' rule of signs, which is a fundamental tool in algebraic geometry which is still used today in many fields (see e.g. \cite{pastuszak2024oneparameterfamilieshermiticitypreservingsuperoperators}, \cite{deshpande2024existenceuniquesolutionparametrized}).
Recall that Descartes' rule of signs states that, for a univariate polynomial
$g$ with real coefficients, the number of
positive real roots of $g$, counted with multiplicity, is bounded above by the sign variation count of the coefficients of
$g$; i.e., the number of times consecutive coefficients change sign, ignoring
zeros. In the case where $g$ has only real roots, then the number of positive roots is counted exactly by the sign variation count of the coefficients. Descartes' rule of signs could also be seen as determining the arrangement of the eigenvalues of the matrix $F = [0]$ and a real matrix $G$ whose characteristic polynomial is $g$.
The eigenvalue configuration problem therefore extends Descartes' rule of signs by
allowing two polynomials of arbitrary degrees; in addition, we reframe the problem slightly by
considering characteristic polynomials of real symmetric matrices, as these occur
naturally in many areas.

Since Descartes' rule of signs is widely used, it is natural to expect that a generalization will have many applications. For
example, one could use this generalization in investigating the impact on the eigenvalues under low rank updates~\cite{benaych2011eigenvalues,hill1992refined}. In our related work \cite{signature}, we discuss a few more potential applications. 

The main difficulty of the eigenvalue configuration problem comes from the fact that it is not practically solvable using existing methods.
While the eigenvalue configuration problem can be solved using general quantifier elimination algorithms 
(see e.g. \cite{McCallum:84,
Ben-Or_Kozen_Reif:86, Grigorev:88,Weispfenning:95,
Gonzalez_Lombardi_Recio_Roy:89, Hong:90a, Hong:90b, Collins_Hong:91, Hong:92a,
Renegar:92a,Canny:93a,Loos_Weispfenning:93, Weispfenning:94b,
Gonzalez-Vega:96, DBLP:journals/jsc/McCallum99, DBLP:conf/issac/McCallumB09,
DBLP:journals/jsc/Brown01a, DBLP:journals/jsc/Brown01, Strzebonski06,
DBLP:conf/issac/ChenMXY09, DBLP:journals/jsc/Brown12, Hong_Safey:2012}), it is very inefficient.
Furthermore, the outputs of these algorithms grow very quickly toward being incomprehensible
for even moderately sized inputs. As a consequence of these limitations, we must exploit particular properties of the eigenvalue configuration problem to develop a practical solution.


The main contribution of this paper is to provide an efficient and structured solution to the eigenvalue configuration problem. We accomplish this by defining combinatorial objects related to the eigenvalue configuration of the matrices which can be counted as the roots of certain symmetric polynomials. In our related work \cite{signature}, we approach the same problem via a method based on the theory of the signature of matrices.

The contribution of this paper serves as one possible way of generalizing Descartes' rule of signs. There has been recent work on generalizing Descartes' rule of signs to consider single multivariate polynomials \cite{telek2024geometry}, but we are not aware of any previous attempts to generalize to more than one univariate polynomial.

The non-triviality of the eigenvalue configuration problem comes from the fact that there exists no closed form  expression for the eigenvalues of a general matrix in terms of the entries. Additionally, while the eigenvalue configuration of an arbitrary given pair of numeric matrices can readily be computed via numeric methods, it is not possible to use numeric methods to solve the problem parametrically, as we will do here.



The paper is structured as follows. In~Section~\ref{sec:EC}, we discuss and define the eigenvalue configuration of two real symmetric matrices. 
In~Section~\ref{sec:problem}, we define and state the problem precisely. 
In~Section~\ref{sec:main}, we state our main theorem (Theorem \ref{thm:main}).
In~Section~\ref{sec:proof}, we prove the main theorem. 
The proof is divided into two parts. 
First, in Section \ref{sec:generic}, we prove it  for special inputs  (``generic'' pairs of real symmetric matrices).
and then, in Section \ref{sec:arbitrary}, we prove it  for all  inputs (arbitrary pair of real symmetric matrices). 
In~Section~\ref{sec:algorithms}, we render our main result into an algorithm for those who are  interested in
implementation. 
In~Section~\ref{sec:conclusion}, we summarize the contributions 
of this paper and discuss potential future directions.
\section{Problem}

\subsection{Defining eigenvalue configuration}
\label{sec:EC}

In this section, we give a precise definition of ``eigenvalue configuration.'' For a more thorough explanation, see our related work \cite{signature}, which deals with the same problem.
First, we introduce some notation.
\begin{notation}
\label{basic_notation}\ \ 
\end{notation}

\begin{enumerate}
\item Let $F = [a_{ij}] \in\mathbb{R}^{m \times m}$ and $G = [b_{ij}] \in\mathbb{R}^{n \times n}$ be
real symmetric matrices.

\item Let $\alpha=\left(  \alpha_{1},\ldots,\alpha_{m}\right)  $ be the
eigenvalues of$\ F$.

Let $\beta=\left(  \beta_{1},\ldots,\beta_{n}\right)  $ \ be the eigenvalues
of $G$.

Since $F$~and~$G$ are real symmetric, all their eigenvalues are real. Thus without losing
generality, let us index the eigenvalues so that
$\alpha_{1}\le\alpha_{2}\le\cdots\le\alpha_{m}$ and $\beta_{1}  \le\beta_{2}\le\cdots\le\beta_{n}$.

\item Let $A_{t}$ denote the set $\{x \in \mathbb{R}: \alpha_{t} < x < \alpha_{t+1}\}$ for $t=1,\ldots,m$, where
$\alpha_{m+1}=\infty$. Note that this set could be empty if $\alpha_t = \alpha_{t+1}$.
\end{enumerate}
We first consider the eigenvalue configuration when the pair $F$ and $G$ is ``generic'' in the following sense.

\begin{definition}
[Generic]\label{def:generic} We say that the pair of matrices $F$ and $G$ is \textbf{generic} if
$F$ and $G$ do not share any eigenvalues.
\end{definition}


\begin{definition}
[Eigenvalue configuration of generic pairs of matrices]\label{def:GEC}The eigenvalue configuration of the generic pair of matrices~$F$ and~$G$, written as $\GEC\left(  F,G\right)  $, is the
tuple $$c=(c_{1},\dots,c_{m})$$ where $$c_{t}=\#\{i:\beta_{i}\in A_{t}\}.$$ 
\end{definition}

\begin{example}
Let $F \in\mathbb{R}^{6 \times6}$ and $G \in\mathbb{R}^{4 \times4} $ be symmetric
matrices such that their corresponding eigenvalues are
\[
\alpha=(0,0,1,3,5,8),\qquad\beta=(-1,2,2,6).
\]
The eigenvalues are arranged on the real line as follows.
\[
\begin{tikzpicture}
\draw (-2, 0) -- (9, 0);
\fill [red]   (0,0) circle(3pt) node[label=$\alpha_1$]{};
\fill [red]   (0,0.8) circle(3pt) node[label=$\alpha_2$]{};
\fill [red]  (1,0) circle(3pt) node[label=$\alpha_3$]{};
\fill [red]   (3,0) circle(3pt) node[label=$\alpha_4$]{};
\fill [red]  (5,0) circle(3pt) node[label=$\alpha_5$]{};
\fill [red]  (8,0) circle(3pt) node[label=$\alpha_6$]{};
\fill [blue] (-1,0) circle(3pt) node[label=$\beta_1$ ]{};
\fill [blue]  (2,0) circle(3pt) node[label=$\beta_2$ ]{};
\fill [blue]  (2,0.8) circle(3pt) node[label=$\beta_3$ ]{};
\fill [blue]  (6,0) circle(3pt) node[label=$\beta_4$]{};
\end{tikzpicture}
\]

\medskip

\noindent Then we have
\[%
\begin{array}
[c]{lcl}%
A_{1}=(\alpha_{1},\alpha_{2}) &= \emptyset& \\
A_{2}=(\alpha_{2},\alpha_{3}) && \\
A_{3}=(\alpha_{3},\alpha_{4}) && \ni\beta_2, \beta_3\\
A_{4}=(\alpha_{4},\alpha_5) && \\
A_{5}=(\alpha_{5},\alpha_6) && \ni \beta_4\\
A_{6}=(\alpha_{6},\infty) && 
\end{array}
\]

\noindent Therefore
\[
\GEC(F,G)=(0,0,2,0,1,0).
\]
Note that the sum of the entries in this vector is the total number of eigenvalues of $G$ minus the number of eigenvalues of $G$ which lie to the left of $\alpha_1$.
\exend
\end{example}

\noindent Now, we extend the above definition to pairs of matrices which are not generic.

\begin{definition}
[Eigenvalue configuration of arbitrary pairs of matrices]\label{def:EC}Let $F$~and~$G$ be arbitrary real symmetric matrices. The eigenvalue configuration of~$F$ and~$G$, written as $\EC\left(  F,G\right)  $, is defined as

\begin{align*}
  \EC(F,G)  
          &= \frac{1}{2^m} \sum_{d \in \{-1, 1\}^m} \GEC \left( F_d ,G
            \right)
\end{align*}
where 
\begin{align*}
  \varepsilon &= \text{ ``small enough'' positive number (see Remark below)} \\
  F_d &= \text{matrix with eigenvalues $\alpha_i + \varepsilon d_i$ for $i = 1, \dots, m$.}
\end{align*}
\end{definition}

\begin{remark}
  \label{remark:generalEC}
  Some notes on Definition \ref{def:EC}:
  \begin{enumerate}
  \item The number $\varepsilon$ is ``small enough'' if no eigenvalue of $F$ is perturbed far enough to cross over the next or previous eigenvalue. In practice, one could take $\varepsilon$ to be smaller than the minimum distance between distinct elements of $(\alpha_1, \dots, \alpha_m, \beta_1, \dots, \beta_n)$.
  \item Note that the pairs $F_d$ and $G$ are generic for all $d \in \{-1, 1\}^m$.
  \item If the pair $F$~and~$G$ is generic, then $\GEC(F_d, G) = \GEC(F,G)$.
  \end{enumerate}
\end{remark}

  \begin{example}[Eigenvalue configuration of arbitrary matrices] \label{ex:g1}
    Let $F \in \mathbb{R}^{3 \times 3}$ and $G \in \mathbb{R}^{2 \times 2}$ be real symmetric matrices such that their corresponding eigenvalues are
    \[
      \alpha = (0,1,1), \qquad \beta = (1,2).
    \]
    Pictorially, the eigenvalue arrangement is as shown in Figure \ref{fig:exec}.
    \begin{figure}[h!]
      \centering
      \begin{tikzpicture}
        \draw (-1, 0) -- (3, 0);
        \fill [red] (0,0) circle(3pt) node[label=$\alpha_1$]{}; \fill [red]
        (1,0) circle(3pt) node[label=$\alpha_2$]{}; \fill [red] (1,0.75)
        circle(3pt) node[label=$\alpha_3$]{}; \fill [blue] (1,1.5)
        circle(3pt) node[label=$\beta_1$]{}; \fill [blue] (2,0)
        circle(3pt) node[label=$\beta_2$]{};
      \end{tikzpicture}
      
      \caption{Eigenvalue configuration with shared eigenvalues}
      \label{fig:exec}
    \end{figure}
    Let $\varepsilon = 0.5$ (which can be verified to satisfy Definition \ref{def:EC}).
  Now, for each $d \in \{-1, 1\}^3$, we compute the eigenvalue configuration of $F_d$ and $G$ in the tables below.
  Note that each pair $F_d$ and $G$ is generic.
  \[
    \begin{array}{|c|c|c|}
      \hline
      d & \text{Picture} & \GEC(F_d, G) \\
      \hline
      - - - 
                & 
                  \begin{tikzpicture}[scale=0.5]
                    \draw (-1, 0) -- (3, 0);
                    \fill [red] (-0.5,0) circle(4pt) node[]{}; \fill
                    [red] (0.5,0) circle(4pt) node[]{}; \fill [red]
                    (0.5,0.5) circle(4pt) node[]{}; \fill [blue] (1,0)
                    circle(4pt) node[]{}; \fill [blue] (2,0)
                    circle(4pt) node[]{};
                  \end{tikzpicture}
                                 &
                                   \begin{bmatrix}
                                     0 \\ 0 \\ 2
                                   \end{bmatrix} \\
      - - + 
                & 
                  \begin{tikzpicture}[scale=0.5]
                    \draw (-1, 0) -- (3, 0);
                    \fill [red] (-0.5,0) circle(4pt) node[]{}; \fill
                    [red] (0.5,0) circle(4pt) node[]{}; \fill [red]
                    (1.5,0) circle(4pt) node[]{}; \fill [blue] (1,0)
                    circle(4pt) node[]{}; \fill [blue] (2,0)
                    circle(4pt) node[]{};
                  \end{tikzpicture}
                                 &
                                   \begin{bmatrix}
                                     0 \\ 1 \\ 1
                                   \end{bmatrix} \\
      - + - 
                & 
                  \begin{tikzpicture}[scale=0.5]
                    \draw (-1, 0) -- (3, 0);
                    \fill [red] (-0.5,0) circle(4pt) node[]{}; \fill
                    [red] (0.5,0) circle(4pt) node[]{}; \fill [red]
                    (1.5,0) circle(4pt) node[]{}; \fill [blue] (1,0)
                    circle(4pt) node[]{}; \fill [blue] (2,0)
                    circle(4pt) node[]{};
                  \end{tikzpicture}
                                 &
                                   \begin{bmatrix}
                                     0 \\ 1 \\ 1
                                   \end{bmatrix} \\
      - + + 
                & 
                  \begin{tikzpicture}[scale=0.5]
                    \draw (-1, 0) -- (3, 0);
                    \fill [red] (-0.5,0) circle(4pt) node[]{}; \fill
                    [red] (1.5,0.5) circle(4pt) node[]{}; \fill [red]
                    (1.5,0) circle(4pt) node[]{}; \fill [blue] (1,0)
                    circle(4pt) node[]{}; \fill [blue] (2,0)
                    circle(4pt) node[]{};
                  \end{tikzpicture}
                                 &
                                   \begin{bmatrix}
                                     1 \\ 0 \\ 1
                                   \end{bmatrix} \\
      \hline
    \end{array} \qquad 
    \begin{array}{|c|c|c|}
      \hline
       d & \text{Picture} & \GEC(F_d, G) \\
      \hline
      + - - 
                & 
                  \begin{tikzpicture}[scale=0.5]
                    \draw (-1, 0) -- (3, 0);
                    \fill [red] (0.5,0) circle(4pt) node[]{}; \fill
                    [red] (0.5,1) circle(4pt) node[]{}; \fill [red]
                    (0.5,0.5) circle(4pt) node[]{}; \fill [blue] (1,0)
                    circle(4pt) node[]{}; \fill [blue] (2,0)
                    circle(4pt) node[]{};
                  \end{tikzpicture}
                                 &
                                   \begin{bmatrix}
                                     0 \\ 0 \\ 2
                                   \end{bmatrix} \\
      + - + 
                & 
                  \begin{tikzpicture}[scale=0.5]
                    \draw (-1, 0) -- (3, 0);
                    \fill [red] (0.5,0) circle(4pt) node[]{}; \fill
                    [red] (0.5,0.5) circle(4pt) node[]{}; \fill [red]
                    (1.5,0) circle(4pt) node[]{}; \fill [blue] (1,0)
                    circle(4pt) node[]{}; \fill [blue] (2,0)
                    circle(4pt) node[]{};
                  \end{tikzpicture}
                                 &
                                   \begin{bmatrix}
                                     0 \\ 1 \\ 1
                                   \end{bmatrix} \\
      + + - 
                & 
                  \begin{tikzpicture}[scale=0.5]
                    \draw (-1, 0) -- (3, 0);
                    \fill [red] (0.5,0) circle(4pt) node[]{}; \fill
                    [red] (1.5,0) circle(4pt) node[]{}; \fill [red]
                    (0.5,0.5) circle(4pt) node[]{}; \fill [blue] (1,0)
                    circle(4pt) node[]{}; \fill [blue] (2,0)
                    circle(4pt) node[]{};
                  \end{tikzpicture}
                                 &
                                   \begin{bmatrix}
                                     0 \\ 1 \\ 1
                                   \end{bmatrix} \\
      + + + 
                & 
                  \begin{tikzpicture}[scale=0.5]
                    \draw (-1, 0) -- (3, 0);
                    \fill [red] (0.5,0) circle(4pt) node[]{}; \fill
                    [red] (1.5,0.5) circle(4pt) node[]{}; \fill [red]
                    (1.5,0) circle(4pt) node[]{}; \fill [blue] (1,0)
                    circle(4pt) node[]{}; \fill [blue] (2,0)
                    circle(4pt) node[]{};
                  \end{tikzpicture}
                                 &
                                   \begin{bmatrix}
                                     1 \\ 0 \\ 1
                                   \end{bmatrix} \\ \hline
    \end{array}
    \]
  Then, applying Definition \ref{def:EC}, we have
     \[ 
     \EC(F,G) \;\; =\;\; \frac{1}{2^m} \sum_{d \in \{-1, 1\}^m} \GEC(F_d, G) 
            \;\; =\;\; \frac{1}{2^3} \left(
               2
               \begin{bmatrix}
                 0 \\ 0 \\ 2
               \end{bmatrix}
               +
               4
               \begin{bmatrix}
                 0 \\ 1 \\ 1
               \end{bmatrix}
               +
               2
               \begin{bmatrix}
                 1 \\ 0 \\ 1
               \end{bmatrix}
               \right) 
            \;\;=\;\; \frac{1}{8}
               \begin{bmatrix}
                 2 \\ 4 \\ 10 
               \end{bmatrix} 
            \;\;=\;\;
               \begin{bmatrix}
                 1/4 \\ 1/2 \\ 5/4.
               \end{bmatrix}.
   \]
\end{example}

\subsection{Stating the problem}
\label{sec:problem}
In this section, we state the problem precisely.
The goal of this paper is to develop an algorithm for the following problem.

\begin{problem}
\ 
\begin{enumerate}
\item[In:\ ] $F \in \mathbb{R}[p]^{m \times m}$ and  $G \in \mathbb{R}[p]^{n \times n}$, symmetric and generic matrices
where $p$ is a finite set of parameters.

$c \hspace{0.35em}\in\mathbb{N}^{m}$, an eigenvalue configuration.

\item[Out:] a \textquotedblleft simple\ condition\textquotedblright\ on 
$p$ such that $c=\EC\left(F,G\right)  $.
\end{enumerate}

\end{problem}

\begin{remark}
  \label{remark:implicit}
  The problem is essentially a quantifier elimination problem; the input is a condition on (1) the eigenvalues, and (2) the parameters $p$; a condition which, when written in terms of the entries, involves quantifiers. The output is a condition on only $p$ which is quantifier-free condition.
  For a more detailed explanation, see Problem 8 in \cite{signature}.
\end{remark}
\section{Main Result}

\label{sec:main}

In this section, we will state the main theorem. For this, we first introduce two notions which are central to the main result: one is purely \textit{combinatorial} (depending only on the size of $F$) and the other is \textit{algebraic} (depending on the entries/parameters of both $F$ and $G$).
\begin{definition}[Combinatorial part] \label{def:Cy}
  The matrix $\Cy \in \mathbb{Q}^{m \times m}$ is the matrix

  \[ \Cy = V_m W_m \]
  where
\[
  (V_{m})_{rt}=
  \frac{(-1)^{r-1}}{2^{t-1}} \binom{t}{r}
\ \ \ \,\text{and \ }\
(W_m)_{ts} = (-1)^{t+s} \binom{m-s}{m-t}.
\]
  
\end{definition}
\begin{example}
  \label{ex:Cy}
  Let $m = 4$. We will construct the matrix $\Cy$.
  Consider for example the entry at row 3, column 3.
  Then we have
  \begin{alignat*}{2}
    (\Cy)_{3,3} &= \sum_{t=1}^{m} (V_4)_{3, t} (W_4)_{t,3} \\
                &= \sum_{t=1}^{4}
                  \frac{1}{2^{t-1}} \binom{t}{3} (-1)^{t + 3} \binom{1}{4-t} && \text{by definition of $V$ and $W$}\\
                &= 
                  \frac{1}{2^{0}} \binom{1}{3} (-1)^{4} \binom{1}{3} +
                  \frac{1}{2^{1}} \binom{2}{3} (-1)^{5} \binom{1}{2} \\ &\qquad+
                  \frac{1}{2^{2}} \binom{3}{3} (-1)^{6} \binom{1}{1} +
                  \frac{1}{2^{3}} \binom{4}{3} (-1)^{7} \binom{1}{0} \quad &&\text{by expanding the sum}\\
                &= 0 + 0 - \frac{1}{4} + 0 \\
                &= -\frac{1}{4}.
  \end{alignat*}

  Repeating this process for all entries of $\Cy$, we have
\[
  \Cy =  
\left[\begin{array}{cccc}
-\frac{1}{4} & 0 & \frac{1}{4} & \frac{1}{2} 
\\
 0 & \frac{1}{4} & 0 & -\frac{3}{4} 
\\
 \frac{1}{4} & 0 & -\frac{1}{4} & \frac{1}{2} 
\\
 \frac{1}{8} & -\frac{1}{8} & \frac{1}{8} & -\frac{1}{8} 
\end{array}\right].
\]
\exend
\end{example}

\begin{definition}[Algebraic part] \label{def:A}
  The column vector $\Ay \in \mathbb{Z}^m$ is the vector whose $r$-th entry is
  $$(\Ay)_r= \overline{v}(D_r),$$
  where
  \begin{align*}
    \overline{v}(D_r) := v(D_r) + \frac{\tdeg D_r}{2},
  \end{align*}
  where $v$ denotes the sign variation count of a polynomial, and
$D_r$ is
  a polynomial in $\mathbb{R}[a,b][x]$ such that
\[D_r(a, b, x) = h_r(\alpha, \beta, x),\]
where $h_r$ is the polynomial
\begin{align*}
  h_{r}&= \prod_{\substack{I \subset [m], \,\,\#I = r \\ j \in [n] }}
         \left(
          x+\prod_{i \in I}\left(  \alpha_{i}-\beta_{j}\right)  \right),
\end{align*}
  and where $a = (a_1, \dots, a_m)$ and $b = (b_1, \dots, b_n)$ are such that
\begin{align*}
f = \det(xI_{m} - F)  &  = x^{m} - a_{1}x^{m-1} + a_{2} x^{m-2} - \cdots+ (-1)^{m}
a_{m} x^{0}\\
g = \det(xI_{n} - G)  &  = x^{n} - b_{1}x^{n-1} + b_{2} x^{n-2} - \cdots+ (-1)^{n}
b_{n} x^{0}.
\end{align*}
Note the alternating signs and reverse indexing from the usual indexing of
polynomial coefficients.
Equivalently, the coefficient $a_i$ is the coefficient of $x^{m-i}$ in the polynomial $\det(xI_m + F)$ (and similarly with~$b_i$~and~$G$). 
\end{definition}

\begin{example}
  \label{ex:A}
Let $m = n = 2$.
We will construct the polynomial $D_2$ which appears in the definition of the vector $\Ay$.
First, we have that
\begin{align*}
h_{2}= &  \prod_{\substack{I \subset \{1,2\}, \,\, \#I = 2 \\ j \in \{1,2\}}}\left(
x+\prod_{p=1}^{2}\left(  \alpha_{i_{p}}-\beta_{j}\right)  \right)  \\
& (x+(\alpha_{1}-\beta_{1})(\alpha_{2}-\beta_{1}))(x+(\alpha_{1}-\beta
_{2})(\alpha_{2}-\beta_{2})).
\end{align*}
Then, we set
\begin{align*}
D_{2}   
&  =x^{2}+(-a_{1}b_{1}+b_{1}^{2}+2a_{2}-2b_{2})x+a_{1}^{2}b_{2}-a_{1}%
a_{2}b_{1}-a_{1}b_{1}b_{2}+a_{2}b_{1}^{2}+a_{2}^{2}-2a_{2}b_{2}+b_{2}^{2},
\end{align*}
where $a_1, a_2, b_1, b_2$ are the respective coefficients of the characteristic polynomials of $F$ and $G$.
One can then verify that $D_2(a, b, x) = h_2(\alpha, \beta, x)$ by expressing $a_1, a_2, b_1, b_2$ in terms of the eigenvalues~$\alpha$~and~$\beta$.
\exend
\end{example}

\noindent Now we are ready to state our main result.

\begin{theorem}
  [Main Result]\label{thm:main}
  Let $F\in\mathbb{R}^{m\times m}$ and
$G\in\mathbb{R}^{n\times n}$ be arbitrary real symmetric matrices. We have%
\[
      \EC\left(  F,G\right) \,\,\;  = \; \,\,  \Cy\;\;\Ay(F,G).
\]
\end{theorem}
\begin{remark}
  Note that the matrix $\Cy$ is entirely numeric and depends only on $m$.
  In addition, the vector~$\Ay$ depends on the sign variation count of the polynomials $D_r$, whose coefficients are polynomials in the entries of $F$ and $G$.
  Hence, the right-hand side of the above contains no references to the eigenvalues of $F$ and $G$ and is therefore quantifier-free.
\end{remark}
\begin{remark}
  In case the reader is familiar with our related work \cite{signature}, one might notice the similarity between the main theorem of that paper and Theorem \ref{thm:generic} in this paper.
  However, the results are based on completely different ideas; our theorem in \cite{signature} is based on the signature of matrices while the theorem in the current paper is based on real root counting of symmetric polynomials, and to our knowledge, there is no obvious connection.
  We have stated the theorems in a similar way to highlight the superficial similarities between the results: both involve a combinatorial part $C$ which involves only $m$, and an algebraic part $A$ which is constructed via the parameters of $F$ and $G$.
\end{remark}

\begin{example}
  Let $m = 4$ and $n=2$ and let $F$ and $G$ be parametric matrices with each entry being an independent parameter; that is, let
  \[
    F =
    \begin{bmatrix}
      a_{1,1} & a_{1,2} & a_{1,3} & a_{1,4} \\
      a_{1,2} & a_{2,2} & a_{2,3} & a_{2,4} \\
      a_{1,3} & a_{2,3} & a_{3,3} & a_{3,4} \\
      a_{1,4} & a_{2,4} & a_{3,4} & a_{4,4} \\
    \end{bmatrix},
    \qquad \qquad
    G =
    \begin{bmatrix}
      b_{1,1} & b_{1,2} \\ b_{1,2} & b_{2,2}
    \end{bmatrix},
  \]
  where each $a_{ij}$ and $b_{ij}$ is an independent parameter.
  We will now use Theorem \ref{thm:main} to write a condition on these parameters so that the eigenvalues of $F$ and $G$ are arranged as in the following picture.
\[
\begin{tikzpicture}
  \draw (-1, 0) -- (7, 0);
  \fill [red] (0,0) circle(3pt) node[label=$\alpha_1$]{};
  \fill [red] (2,0) circle(3pt) node[label=$\alpha_2$]{};
  \fill [red] (5,0) circle(3pt) node[label=$\alpha_3$]{};
  \fill [red] (6,0) circle(3pt) node[label=$\alpha_4$]{};
  \fill [blue] (1,0) circle(3pt) node[label=$\beta_1$]{};
  \fill [blue] (3,0) circle(3pt) node[label=$\beta_2$]{};
\end{tikzpicture}
\]
That is, we will find a quantifier-free condition for $\EC(F,G) =
\begin{bmatrix}
  1 \\ 1 \\ 0 \\ 0
\end{bmatrix}.$
By Theorem \ref{thm:generic}, we have
\begin{align*}
  \EC(F,G) =
  \begin{bmatrix}
  1 \\ 1 \\ 0 \\ 0
  \end{bmatrix} \qquad \iff \qquad 
  \begin{bmatrix}
  1 \\ 1 \\ 0 \\ 0
  \end{bmatrix} = \Cy \,\, \Ay (F,G).
\end{align*}
In Example \ref{ex:Cy} we found that
\[
  \Cy = 
\left[\begin{array}{cccc}
-\frac{1}{4} & 0 & \frac{1}{4} & \frac{1}{2} 
\\
 0 & \frac{1}{4} & 0 & -\frac{3}{4} 
\\
 \frac{1}{4} & 0 & -\frac{1}{4} & \frac{1}{2} 
\\
 \frac{1}{8} & -\frac{1}{8} & \frac{1}{8} & -\frac{1}{8} 
\end{array}\right].
\]
From Definition \ref{def:A} we have
\[
  \Ay(F,G) =
  \begin{bmatrix}
    \overline{v}(D_1) \\ \overline{v}(D_2) \\ \overline{v}(D_3) \\ \overline{v}(D_4) 
  \end{bmatrix},
\]
where each $D_r$ can be computed using Definition \ref{def:A} as in Example \ref{ex:A}.
Hence we have that
\begin{align*}
  \EC(F,G) =
  \begin{bmatrix}
  1 \\ 1 \\ 0 \\ 0
  \end{bmatrix} \qquad \iff \qquad 
  \begin{bmatrix}
  1 \\ 1 \\ 0 \\ 0
  \end{bmatrix} = 
\left[\begin{array}{cccc}
-\frac{1}{4} & 0 & \frac{1}{4} & \frac{1}{2} 
\\
 0 & \frac{1}{4} & 0 & -\frac{3}{4} 
\\
 \frac{1}{4} & 0 & -\frac{1}{4} & \frac{1}{2} 
\\
 \frac{1}{8} & -\frac{1}{8} & \frac{1}{8} & -\frac{1}{8} 
\end{array}\right]
  \begin{bmatrix}
    \overline{v}(D_1) \\ \overline{v}(D_2) \\ \overline{v}(D_3) \\ \overline{v}(D_4) 
  \end{bmatrix}.
\end{align*}
Solving the linear system above gives the solution
\[
\begin{bmatrix}
  \overline{v}(D_1)  \\ \overline{v}(D_2) \\ \overline{v}(D_3) \\ \overline{v}(D_4)
\end{bmatrix} =
\begin{bmatrix}
  3 \\ 7 \\ 5 \\ 1
\end{bmatrix}.
\]
We therefore have that
\[
  \EC(F,G) =
  \begin{bmatrix}
    1 \\ 1 \\ 0 \\ 0
  \end{bmatrix} \qquad \iff \qquad
\begin{bmatrix}
  \overline{v}(D_1)  \\ \overline{v}(D_2) \\ \overline{v}(D_3) \\ \overline{v}(D_4)
\end{bmatrix}
  =
\begin{bmatrix}
  3 \\ 7 \\ 5 \\ 1
\end{bmatrix}.
\]
Since each $D_r$ is a polynomial with coefficients which are themselves coefficients in the parameters $a_{ij}$ and~$b_{ij}$, the right-hand side of the above is quantifier-free.
  \exend
\end{example}

\begin{remark}
In \cite{signature}, we also approach the eigenvalue configuration problem with an entirely distinct algorithm using the theory of the signature of matrices. This signature approach is structured analogously to Theorem \ref{thm:main}, having a combinatorial part $C$ and an algebraic part $A$.
As such, it is natural to compare the two approaches. To do this, we will examine the total number of terms appearing in each respective $A$ vector (i.e. the size of the $A$ vector times the degree of each polynomial).

\begin{enumerate}
\item {\sl Symmetric polynomials approach}: 
By Definition \ref{def:A}, the size of $\Ay$ is $m$ and the degree of $(\Ay)_r$ equals $n \binom{m}{r}$.
  Hence, the total number of terms is
  \[
    \sum_{r=1}^{m} n \binom{m}{r} \;\;\;=\;\;\; n \sum_{r=1}^{m} \binom{m}{r} \;\;\;=\;\;\; n (2^m - 1).
    \]
  \item {\sl Signature approach}: The size of $A_{\text{sig}}$ is $2^m$ and each entry is a polynomial of degree $n$. However, the first entry in $A_{\text{sig}}$ is constant, so the total number of terms which actually contain information is
    \[
      n (2^m - 1).
      \]
      See the relevant definitions in \cite{signature} for more details.
\end{enumerate}

\noindent Interestingly, both approaches give the same number $n (2^m - 1)$ of total terms, despite the fact that the approaches use totally different ideas. This raises the open question of whether this number is inherent to the eigenvalue configuration problem or there is some hidden connection between the two approaches.
\end{remark}

\section{Proof / Derivation}



\label{sec:proof}
In this section, we will prove the main result (Theorem \ref{thm:main}). We divide the proof into two parts: first, in Section \ref{sec:generic}, we prove the main result in the case where the pair of matrices is generic; then, in Section \ref{sec:arbitrary}, we fully generalize 
\subsection{Proof for generic pairs of matrices} \label{sec:generic}

In this section, we prove the main result in the case where the pair $F$ and $G$ is generic; that is, where $F$ and~$G$ do not share eigenvalues.
We will prove the following theorem.
\begin{theorem}[Main Result for Generic Pairs of Matrices]
  \label{thm:generic}
  Let $F\in\mathbb{R}^{m\times m}$ and
$G\in\mathbb{R}^{n\times n}$ be a \textbf{generic pair} of real symmetric matrices. We have%
\[
      \EC\left(  F,G\right) \,\,\;  = \; \,\,  \Cy\;\;\Ay(F,G).
\]
\end{theorem}

The proof is structured as follows.
\begin{enumerate}
\item First, in Lemmas \ref{lem:tdef} through \ref{iff}, we establish a bijective combinatorial correspondence between the number of positive roots of the $h$ polynomials from Definition \ref{def:A} and the eigenvalue configuration vector.
\item Then, in Proposition \ref{ftsp}, we use the fact that the polynomials $h_r$ are symmetric in the \emph{eigenvalues}~$\alpha$ and~$\beta$ of $F$ and $G$, respectively, to rewrite them in terms of the \textit{coefficients} of the characteristic polynomials of $F$ and $G$. This step eliminates all references to the eigenvalues and therefore concludes the proof.
\end{enumerate}
We will repeatedly revisit the following running example throughout the proof.
\begin{example}[Running example] \label{ex:r1}
\noindent Let
\[
F =
\begin{bmatrix}
4 & 0 \\
0 & 4 
\end{bmatrix}
\in\mathbb{R}^{2 \times2} \qquad\qquad G =
\begin{bmatrix}
2 & 0 & 0\\
0 & 2 & 0\\
0 & 0 & 8
\end{bmatrix}
\in\mathbb{R}^{3 \times3}.
\]
Then their respective eigenvalues are
\[
\alpha= (4, 4) \qquad\qquad\beta= (2,2,8)
\]
\[
\begin{tikzpicture}
  \draw (0, 0) -- (9, 0);
  \fill [red] (4,0) circle(3pt) node[label=$\alpha_1$]{};
  \fill [red] (4,0.8) circle(3pt) node[label=$\alpha_2$]{};
  \fill [blue] (2,0) circle(3pt) node[label=$\beta_1$]{};
  \fill [blue] (2,0.8) circle(3pt) node[label=$\beta_2$]{};
  \fill [blue] (8,0) circle(3pt) node[label=$\beta_3$]{};
\end{tikzpicture}
\]

\noindent So,
\[
  \EC(F,G) =
  \begin{bmatrix}
    c_{1} \\ c_{2}
  \end{bmatrix}
  =
  \begin{bmatrix}
    0 \\ 1
  \end{bmatrix}
  .
\]
\exend
\end{example}

\begin{lemma}[Transform]\label{lem:tdef}We have
\[
c=\EC\left(  F,G\right)  \ \ \ \ \Longrightarrow\ \ \ \ y=T_{m}c
\]
where%
\begin{alignat*}{2}
  y_r &= \# \text{ positive roots of $h_r$, counting multiplicity}  &&\\
  (T_{m})_{rs}&= \# \{\, I \subset [m] \,:\, \#I = r \,\,\wedge\,\, \#\{i \in I: i \le s\} \text{ is odd} \, \} \\
  h_r &= \prod_{\substack{I \subset [m], \,\,\#I = r \\ j \in [n] }} \left(x+\prod_{p=1}^{r}\left(  \alpha_{i_{p}}-\beta_{j}\right)  \right) && \qquad \text{from Definition \ref{def:A}}.
\end{alignat*}
\end{lemma}

\begin{example}[Running example] \label{ex:r2}
  Recall Example \ref{ex:r1}, where we had $F$ and $G$ such that
  \begin{align*}
    \EC(F,G)=
    \begin{bmatrix}
      0 \\ 1
    \end{bmatrix}.
  \end{align*}
  Using the fact that the respective eigenvalues of $F$ and $G$ are $\alpha = (4,4)$ and $\beta = (2,2,8)$, together with the definition of $h_r$ from Definition \ref{def:A}, we compute 
  \begin{align*}
    h_1 &= 
          \prod_{\substack{I \subset [2], \,\,\#I = 1 \\ j \in [3] }} \left(x+\prod_{p=1}^{1}\left(  \alpha_{i_{p}}-\beta_{j}\right)  \right)  \\
        &= \left(x +\alpha_{1}-\beta_{1}\right) \left(x +\alpha_{2}-\beta_{1}\right) \left(x +\alpha_{1}-\beta_{2}\right) \left(x +\alpha_{2}-\beta_{2}\right) \left(x +\alpha_{1}-\beta_{3}\right) \left(x +\alpha_{2}-\beta_{3}\right) \\
        &= (x+2)^4 (x-4)^2 \\
    h_2 &= \left(x +\left(-\beta_{1}+\alpha_{1}\right) \left(-\beta_{1}+\alpha_{2}\right)\right) \left(x +\left(-\beta_{2}+\alpha_{1}\right) \left(-\beta_{2}+\alpha_{2}\right)\right) \left(x +\left(\alpha_{1}-\beta_{3}\right) \left(\alpha_{2}-\beta_{3}\right)\right) \\
        &= (x+4)^2 (x+16).
  \end{align*}
  We therefore have
  \begin{alignat*}{3}
    y_1 &= \# \text{ positive roots of $h_1$, counting multiplicity} = 2 \\
    y_2 &= \# \text{ positive roots of $h_2$, counting multiplicity} = 0.
  \end{alignat*}
  On the other hand, using the definition of $T_m$ we construct
  \begin{align*}
    T_2 &=
          \begin{bmatrix}
            1 & 2 \\ 1 & 0
          \end{bmatrix}.
  \end{align*}
  Therefore
  \begin{align*}
    T_2 c &=
            \begin{bmatrix}
              1 & 2 \\ 1 & 0
            \end{bmatrix}
            \begin{bmatrix}
              0 \\ 1
            \end{bmatrix} =
            \begin{bmatrix}
              2 \\ 0
            \end{bmatrix} = y.
  \end{align*}
  \exend
\end{example}

\begin{proof}[Proof of Lemma \ref{lem:tdef}]
  Assume that $c=\EC\left(  F,G\right)  $. It suffices to show $y=T_{m}c$.
  Recall that
  \[
    y_r = \# \text{ positive roots of $h_r$, counting multiplicity}.
  \]
  Equivalently, using the definition of $h_r$, we have 
\[
y_{r}=\#\left\{ (i_1, \dots, i_r, j) \in Y_r : \prod
_{p=1}^{r}\left(  \alpha_{i_{p}}-\beta_{j}\right)  <0\right\},
\]
where \[
  Y_r = \{ (i_1, \dots, i_r, j) : 1 \le i_1 < \dots < i_r \le m \, \,\wedge\,\, j \in [n]\}.
  \]
We proceed by repeatedly rewriting the definition of $y_{r}$, with the goal of
expressing it in terms of the eigenvalue configuration vector. We begin with
the definition of $y_{r}$.
\[
y_{r}=\#\left\{  \left(  i_{1},\ldots,i_{r},j\right)  \in Y_{r}:\prod
_{p=1}^{r}\left(  \alpha_{i_{p}}-\beta_{j}\right)  <0\right\}
\]
Note that for each $(i_{1},\dots,i_{r})$ satisfying $1\leq i_{1}<\cdots
<i_{r}\leq m$, we also have $(i_{1},\dots,i_{r},j)\in Y_{r}$ simply by
appending each $j=1,\dots,m$. Hence, we can rewrite this action of counting
over the set $Y_{r}$ as a summation over all such tuples $(i_{1},\dots,i_{r}%
)$. Thus we obtain
\[
y_{r}=\sum_{1\leq i_{1}<\cdots<i_{r}\leq m}\ \#\left\{  j:\prod_{p=1}%
^{r}\left(  \alpha_{i_{p}}-\beta_{j}\right)  <0\right\}
\]
We can then introduce another summation by partitioning the set on the
right-hand side depending on which interval $A_{s}$ each $\beta_{j}$ belongs
to.
\[
y_{r}=\sum_{1\leq s\leq m}\ \sum_{1\leq i_{1}<\cdots<i_{r}\leq m}\ \#\left\{
j:\beta_{j}\in A_{s}\ \ \wedge\ \ \prod_{p=1}^{r}\left(  \alpha_{i_{p}}%
-\beta_{j}\right)  <0\right\}
\]
Then, we eliminate the product symbol by observing that the product
$\prod_{p=1}^{r}(\alpha_{i_{p}}-\beta_{j})$ is negative if and only if there
are an odd number of $p$'s such that $\alpha_{i_{p}}-\beta_{j}<0$, or
equivalently $\alpha_{i_{p}}<\beta_{j}$.%

\begin{align*}
y_{r}  &  =\sum_{1\leq s\leq m}\ \sum_{1\leq i_{1}<\cdots<i_{r}\leq
m}\ \#\left\{  j:\beta_{j}\in A_{s}\ \ \wedge\ \ \#\left\{  p:\alpha_{i_{p}%
}<\beta_{j}\right\}  \text{ is odd}\right\}
\end{align*}
Next, we use the fact that the $\alpha$'s are indexed in ascending order, and
we note that $\alpha_{i_{p}} < \beta_{j}$ if and only if~$i_{p} \le s$, since
$\beta_{j} \in A_{s}$. Thus
\begin{align*}
y_{r}  &  =\sum_{1\leq s\leq m}\ \sum_{1\leq i_{1}<\cdots<i_{s}\leq
m}\ \#\left\{  j:\beta_{j}\in A_{s}\ \ \wedge\ \ \#\left\{  p:i_{p}\leq
s\right\}  \text{ is odd}\right\}
\end{align*}
Now, note that if $\# \{p : i_{p} \le s\}$ is even, then the size of the set
in the summand is zero. If $\# \{p : i_{p} \le s\}$ is odd, then the size of
the set in the summand equals $\#\{j : \beta_{j} \in A_{s}\}$, which is
exactly $c_{s}$. Hence
\begin{align*}
y_{s}  &  =\sum_{1\leq s\leq m}\ \sum_{1\leq i_{1}<\cdots<i_{r}\leq m}\left\{
\begin{array}
[c]{ll}%
0 & \text{if\ \ }\#\left\{  p:i_{p}\leq s\right\}  \text{ is even}\\
c_{s} & \text{if\ \ }\#\left\{  p:i_{p}\leq s\right\}  \text{ is odd}
\end{array}
\right.
\end{align*}
Next, we factor out the $c_{s}$ and move it outside the innermost summation.
\begin{align*}
y_{r}  &  =\sum_{1\leq s\leq m}\ \sum_{1\leq i_{1}<\cdots<i_{r}\leq m}\left(
\left\{
\begin{array}
[c]{ll}%
0 & \text{if\ \ }\#\left\{  p:i_{p}\leq s\right\}  \text{ is even}\\
1 & \text{if\ \ }\#\left\{  p:i_{p}\leq s\right\}  \text{ is odd}
\end{array}
\right.  c_{s}\right) \\
&  =\sum_{1\leq s\leq m}\ \left(  \sum_{1\leq i_{1}<\cdots<i_{r}\leq
m}\left\{
\begin{array}
[c]{ll}%
0 & \text{if\ \ }\#\left\{  p:i_{p}\leq r\right\}  \text{ is even}\\
1 & \text{if\ \ }\#\left\{  p:i_{p}\leq r\right\}  \text{ is odd}
\end{array}
\right.  \right)  c_{s}%
\end{align*}
Then, we fold the conditional into the summation.
\[
y_{r}=\sum_{1\leq s\leq m}\left(  \sum_{\substack{1\leq i_{1}<\cdots<i_{r}\leq
m\\\#\left\{  p:i_{p}\leq s\right\}  \text{ is odd}}}1\right)  c_{s}%
\]
Since the innermost summand is just a summation of 1's, we can view it as
counting the elements of the set~$\{(i_{1},\dots,i_{r}):1\leq i_{1}%
<\cdots<i_{r}\leq m\,\wedge\,\#\{p:i_{p}\leq s\}\text{ is odd}\}$.
Rewriting this, we see that
\begin{align*}
  \#\{(i_{1},\dots,i_{r}):1\leq i_{1}%
  <\cdots<i_{r}\leq m\,\wedge\,\#\{p:i_{p}\leq s\}\text{ is odd}\} &= \#\{ I \subset [m] : \# I = r \, \, \wedge \,\, \#\{i \in I : i \le s\} \text{ is odd} \} \\&= (T_m)_{rs}.
\end{align*}
Hence
\[
y_{r}=\sum_{1\leq s\leq m}(T_{m})_{rs}c_{s}.
\]
By the definition of matrix multiplication, this is just the multiplication of
the vector $c=(c_{1},\dots,c_{m})^{T}$ by the matrix $T_{m}$ whose entries are
defined as above. Thus%
\[
\left(
\begin{array}
[c]{c}%
y_{1}\\
\vdots\\
y_{m}%
\end{array}
\right)  =\left(
\begin{array}
[c]{ccc}%
(T_{m})_{11} & \cdots & (T_{m})_{1m}\\
\vdots &  & \vdots\\
(T_{m})_{m1} & \cdots & (T_{m})_{mm}%
\end{array}
\right)  \left(
\begin{array}
[c]{c}%
c_{1}\\
\vdots\\
c_{m}%
\end{array}
\right)
\]
that is%
\[
y=T_{m}c.
\]
Therefore
\[
c=\EC\left(  F,G\right)  \ \ \ \ \Longrightarrow\ \ \ \ y=T_{m}c.
\]
\end{proof}

Now, we will establish the other direction of the correspondence between $c$ and $y$. To do this, we will give a closed-form expression for $T_m$, and show that it is invertible.

\begin{lemma}
  \label{lem:inv} We have $T_m = L_m U_m$ where
  \begin{align*}
    (L_m)_{rt} &= \binom{m-t}{r-t} \\
    (U_m)_{ts} &= (-2)^{t-1} \binom{s}{t}.
  \end{align*}
  In particular, we have that $T_m$ is an invertible matrix.
\end{lemma}

\begin{example}[Running example] \label{ex:r3}
  In Example \ref{ex:r2}, we found that
  \begin{align*}
    T_2 &=
          \begin{bmatrix}
            1 & 2 \\ 1 & 0
          \end{bmatrix}.
  \end{align*}
  Observe that
  \begin{align*}
    \det(T_2) = 0 - 2 = -2 \ne 0,
  \end{align*}
  and so $T_2$ is invertible.
  \exend
\end{example}

\begin{proof}[Proof of Lemma \ref{lem:inv}]
It suffices to prove that $\det\left( T_{m}\right)\neq0$. We prove it in
several stages. First, we establish a recurrence relation for the entries of
$T_{m}$, and then apply that to decompose $T_{m}$ into a product of triangular
matrices to more easily compute the determinant.

\bigskip

\noindent\textsf{Claim 1:} We have%
\[
(T_{m})_{rs}=\left\{
\begin{array}
[c]{ll}%
s & \text{if }\left(  r,s\right)  \in\left\{  1\right\}  \times\left\{
1,\ldots,m\right\} \\
\noalign{\vspace*{1mm}} \left\{
\begin{array}
[c]{ll}%
0 & \text{if }s\ \text{is even} \\
1 & \text{if }s\ \text{is odd}
\end{array}
\right.  & \text{if }\left(  r,s\right)  \in\left\{  m\right\}  \times\left\{
1,\ldots,m\right\} \\
\noalign{\vspace*{2mm}} \left\{
\begin{array}
[c]{ll}%
\left(
\begin{array}
[c]{c}%
m\\
r
\end{array}
\right)  & \text{if }r\ \text{is odd}\\
0 & \text{if }r\ \text{is even}%
\end{array}
\right.  & \text{if }\left(  r,s\right)  \in\left\{  2,\ldots,m-1\right\}
\times\left\{  m\right\} \\
\noalign{\vspace*{2mm}} (T_{m-1})_{r-1,s}+(T_{m-1})_{r,s} & \text{if }\left(
r,s\right)  \in\left\{  2,\ldots,m-1\right\}  \times\left\{  1,\ldots
,m-1\right.  \}.
\end{array}
\right.
\]
Proof of the claim: There are four cases in the above. We will prove them one
by one.

\begin{enumerate}
\item $\left(  r,s\right)  \in\left\{  1\right\}  \times\left\{
1,\ldots,m\right\}  $.

By definition, we have
\[
  (T_m)_{1s} \,\,=\,\, \# \{\, I \subset [m] \,:\, \#I = 1 \,\,\wedge\,\, \#\{i \in I: i \le s\} \text{ is odd} \, \}.
\]
Since we are only considering $\#I = 1$, then $\#\{i \in I : i\leq s\}$ can only be
zero or one. If that quantity is zero, then $(T_{m})_{1,s}$ is zero, since
zero is even. If instead $\#\{i \in I : i\leq s\}=1$, then the single element of $I$ is between $1$ and $s$, so
there are $s$ choices for that element. Thus, in this case $(T_{m})_{1,s}=s$.
Together, we have that
\[
  (T_m)_{1s} =
  \begin{cases}
    0 & \text{ if $s$ is even.} \\
    1 & \text{ if $s$ is odd}. 
  \end{cases}
\]

\item $\left(  r,s\right)  \in\left\{  m\right\}  \times\left\{
1,\ldots,m\right\}  $.

By definition, we have
\begin{align*}
  (T_m)_{ms} &= \# \{\, I \subset [m] \,:\, \#I = m \,\,\wedge\,\, \#\{i \in I: i \le s\} \text{ is odd} \, \} \\
&  =\left\{
\begin{array}
[c]{ll}%
1 & \text{if }s\ \text{is odd}\\
0 & \text{if }s\ \text{is even}.
\end{array}
\right.
\end{align*}

\item $\left(  r,s\right)  \in\left\{  2,\ldots,m-1\right\}  \times\left\{
m\right\}  $.

By definition of $(T_{m})_{rs}$ we have
\[
  (T_m)_{rm} = \# \{\, I \subset [m] \,:\, \#I = r \,\,\wedge\,\, \#\{i \in I: i \le m\} \text{ is odd} \, \}.
\]
But $i \leq m$ for all $i \in I$ trivially, so $\#\{i \in I : i \le m\}=\#I = r$. Therefore
\begin{align*}
  (T_m)_{rm} &= \# \{\, I \subset [m] \,:\, \#I = r \,\,\wedge\,\, \#\{i \in I: i \le m\} \text{ is odd} \, \} \\
&  =\#\left\{ I \subset [m] : \#I = r  \ \ \ \wedge\ \ \ r\text{ is odd}\right\} \\
&  =\left\{
\begin{array}
[c]{ll}%
\#\left\{  I \subset [m] : \#I = r \right\}  & \text{if }r\ \text{is odd}\\
\#\left\{  {}\right\}  & \text{if }r\ \text{is even}%
\end{array}
\right. \\
&  =\left\{
\begin{array}
[c]{ll}%
\left(
\begin{array}
[c]{c}%
m\\
r
\end{array}
\right)  & \text{if }r\ \text{is odd}\\
0 & \text{if }r\ \text{is even}.
\end{array}
\right.
\end{align*}

\item $\left(  r,s\right)  \in\left\{  2,\ldots,m-1\right\}  \times\left\{
1,\ldots,m-1\right\}  $.

By definition, we have%
\[
  (T_m)_{rs} = \# \{\, I \subset [m] \,:\, \#I = r \,\,\wedge\,\, \#\{i \in I: i \le s\} \text{ is odd} \, \}.
\]
We now partition the set we are counting into disjoint sets with $m \in I$ and $m \not \in I$.
Then we have
\begin{align*}
  (T_m)_{rs} = \qquad &\# \{\, I \subset [m] \,:\, \#I = r \,\,\wedge\,\, m \in I \,\,\wedge\,\, \#\{i \in I: i \le s\} \text{ is odd} \, \} \\
  + &\# \{\, I \subset [m] \,:\, \#I = r \,\,\wedge\,\, m \not \in I \,\,\wedge\,\, \#\{i \in I: i \le s\} \text{ is odd} \, \}.
\end{align*}
If $m \in I$, then $I \setminus \{m\} \subset [m-1]$. In addition, we still have that $\#\{i \in I: i\leq
s\}$ is odd, because $s<m$. These are exactly the tuples that are counted by
$(T_{m-1})_{r-1,s}$; thus
\begin{align*}
\# \{\, I \subset [m] \,:\, \#I = r \,\,\wedge\,\, m \in I \,\,\wedge\,\, \#\{i \in I: i \le s\} \text{ is odd} \, \}
\,\,=\,\,    (T_{m-1})_{r-1,s}.
\end{align*}
On the other hand, if $m \not \in I$, then $I \subset [m-1]$ and again $\#\{i \in I: i \leq s\}$
remains odd. These tuples are counted by $(T_{m})_{r,s}$, and so
\begin{align*}
  \# \{\, I \subset [m] \,:\, \#I = r \,\,\wedge\,\, m \not \in I \,\,\wedge\,\, \#\{i \in I: i \le s\} \text{ is odd} \, \} \,\,
=  \,\,  (T_{m-1})_{r,s}.
\end{align*}
Putting those together, we get that
\[
(T_{m})_{rs}=(T_{m-1})_{r-1,s}+(T_{m-1})_{r,s}.
\]

\end{enumerate}

\noindent We have proved Claim 1.

\bigskip

\noindent\textsf{Claim 2:} We have
\[
T_{m}=L_{m}U_{m}%
\]
where
\[
(L_{m})_{rt}=\left(
\begin{array}
[c]{c}%
m-t\\
r-t
\end{array}
\right)  \ \ \ \,\text{and \ }\ (U_{m})_{ts}=\left(  -2\right)  ^{t-1}\left(
\begin{array}
[c]{c}%
s\\
t
\end{array}
\right);
\]
that is, for $1 \le r,s \le m$ we have
$$(T_m)_{rs} = \sum_{t=1}^{m} \binom{m-t}{r-t} (-2)^{t-1}
\binom{s}{t}.$$

\noindent Proof of the claim. It suffices to show%
\[
(T_{m})_{rs}=\sum_{t=1}^{m}(L_{m})_{rt}(U_{m})_{ts}%
\]
We proceed by induction on $m$. Note that when $m=1$, we have
\[
1=(T_{1})_{11}=\binom{1-1}{1-1}(-2)^{1-1}\binom{1}{1}=L_{1}U_{1}.
\]
For the induction hypothesis, suppose the claim is true for some $m-1$. We
will prove the claim for $m$. There are four cases in the above. We will prove
them one by one.

\begin{enumerate}
\item $\left(  r,s\right)  \in\left\{  1\right\}  \times\left\{
1,\ldots,m\right\}  $.

We have
\begin{align*}
&  \sum_{t=1}^{m}(L_{m})_{1t}(U_{m})_{ts}\\
&  =\sum_{t=1}^{m}\binom{m-t}{1-t}(-2)^{t-1}\binom{s}{t}\\
&  =\binom{m-1}{1-1}(-2)^{1-1}\binom{s}{1}\ \ \ \ \text{since }\binom
{m-t}{1-t}=0\ \text{for }t>1\\
&  =s\\
&  =(T_{m})_{1s}%
\end{align*}

\item $\left(  r,s\right)  \in\left\{  m\right\}  \times\left\{
1,\ldots,m\right\}  $.

We have
\begin{align*}
&  \sum_{t=1}^{m}(L_{m})_{mt}(U_{m})_{ts}\\
&  =\sum_{t=1}^{m}\left(
\begin{array}
[c]{c}%
m-t\\
m-t
\end{array}
\right)  \left(  -2\right)  ^{t-1}\left(
\begin{array}
[c]{c}%
s\\
t
\end{array}
\right) \\
&  =\sum_{t=1}^{s}\left(  -2\right)  ^{t-1}\left(
\begin{array}
[c]{c}%
s\\
t
\end{array}
\right)  \ \ \,\,\,\text{since }\left(
\begin{array}
[c]{c}%
s\\
t
\end{array}
\right)  =0\ \,\text{for }t>s\\
&  =\frac{1}{-2}\left(  -1+\sum_{t=0}^{s}\left(  1\right)  ^{s-t}\left(
-2\right)  ^{t}\left(
\begin{array}
[c]{c}%
s\\
t
\end{array}
\right)  \right) \\
&  =\frac{1}{-2}\left(  -1+\left(  1-2\right)  ^{s}\right) \\
&  =\left\{
\begin{array}
[c]{ll}%
1 & \text{if }s\ \text{is odd}\\
0 & \text{if }s\ \text{is even}%
\end{array}
\right. \\
&  =\left(  T_{m}\right)  _{ms}%
\end{align*}

\item $\left(  r,s\right)  \in\left\{  2,\ldots,m-1\right\}  \times\left\{
m\right\}  $.

We have
\begin{align*}
\sum_{t=1}^{m}(L_{m})_{rt}(U_{m})_{tm}  &  =\sum_{t=1}^{n}\binom{m-t}%
{r-t}(-2)^{t-1}\binom{m}{t}\\
&  =\sum_{t=1}^{m}\frac{\left(  m-t\right)  !}{\left(  m-r\right)  !\left(
r-t\right)  !}(-2)^{t-1}\frac{m!}{\left(  m-t\right)  !t!}\\
&  =\sum_{t=1}^{m}\frac{m!}{\left(  m-r\right)  !}\frac{1}{\left(  r-t\right)
!t!}(-2)^{t-1}\\
&  =\sum_{t=1}^{m}\frac{m!}{\left(  m-r\right)  !r!}\frac{r!}{\left(
r-t\right)  !t!}(-2)^{t-1}\\
&  =\sum_{t=1}^{m}\left(
\begin{array}
[c]{c}%
m\\
r
\end{array}
\right)  \left(
\begin{array}
[c]{c}%
r\\
t
\end{array}
\right)  (-2)^{t-1}\\
&  =\left(
\begin{array}
[c]{c}%
m\\
r
\end{array}
\right)  \sum_{t=1}^{m}\left(
\begin{array}
[c]{c}%
r\\
t
\end{array}
\right)  (-2)^{t-1}\\
&  =\left(
\begin{array}
[c]{c}%
m\\
r
\end{array}
\right)  \left(  \frac{1}{-2}\left(  -1+\sum_{t=0}^{m}\left(
\begin{array}
[c]{c}%
r\\
t
\end{array}
\right)  \left(  1\right)  ^{r-t}(-2)^{t}\right)  \right) \\
&  =\left(
\begin{array}
[c]{c}%
m\\
r
\end{array}
\right)  \left(  \frac{1}{-2}\left(  -1+\left(  1-2\right)  ^{r}\right)
\right) \\
&  =\left\{
\begin{array}
[c]{ll}%
\left(
\begin{array}
[c]{c}%
m\\
r
\end{array}
\right)  & \text{if }r\ \text{is odd}\\
0 & \text{if }r\ \text{is even}%
\end{array}
\right. \\
&  =\left(  T_{m}\right)  _{rm}%
\end{align*}

\item $\left(  r,s\right)  \in\left\{  2,\ldots,m-1\right\}  \times\left\{
1,\ldots,m-1\right\}  $.

Recall that we have
\[
(T_{m})_{rs}=(T_{m-1})_{r-1,s}+(T_{m-1})_{r,s}.
\]
By the induction hypothesis, we have
\begin{align*}
(T_{m})_{rs}  &  =\sum_{t=1}^{m-1}(L_{m-1})_{r-1,t}(U_{m-1})_{t,s}+\sum
_{t=1}^{m-1}(L_{m-1})_{r,t}(U_{m-1})_{t,s}\\
&  =\sum_{t=1}^{m-1}(L_{m-1})_{r-1,t}(U_{m-1})_{t,s}+(L_{m-1})_{r,t}%
(U_{m-1})_{t,s}\\
&  =\sum_{t=1}^{m-1}((L_{m-1})_{r-1,t}+(L_{m-1})_{r,t})(U_{m-1})_{t,s}\\
&  =\sum_{t=1}^{m-1}\left(  \binom{m-1-t}{r-1-t}+\binom{m-1-t}{r-t}\right)
(U_{m-1})_{t,s}\\
&  =\sum_{t=1}^{m-1}\binom{m-t}{r-t}(U_{m-1})_{t,s}\\
&  =\sum_{t=1}^{m-1}(L_{m})_{r,t}(U_{m-1})_{t,s}.
\end{align*}
But $(U_{m-1})_{t,s}$ does not depend on $m$. Thus $(U_{m-1})_{t,s}%
=(U_{m})_{t,s}$. Hence
\begin{align*}
(T_{m})_{rs}  &  =\sum_{t=1}^{m-1}(L_{m})_{r,t}(U_{m})_{t,s}\\
&  =\sum_{t=1}^{m-1}(L_{m})_{r,t}(U_{m})_{t,s}+\underbrace{\binom{m-m}%
{r-m}(-2)^{r-1}\binom{s}{m}}_{=0\text{ since }s<m}\\
&  =\sum_{t=1}^{m}(L_{m})_{r,t}(U_{m})_{t,s}.
\end{align*}

\end{enumerate}

\noindent We have proved Claim 2.

\bigskip

\noindent Now note that $L_{m}$ is lower triangular, since if $s>r$ then
$r-s<0$ and so $\binom{m-s}{r-s}=0$. Similarly, $U_{m}$ is upper triangular,
since if $r>s$ then $\binom{s}{r}=0$. Further, we have
\[
\det(L_{m})=\prod_{t=1}^{m}(L_{m})_{tt}=\prod_{t=1}^{m}\binom{m-t}{t-t}%
=\prod_{t=1}^{m}\binom{m-t}{0}=\prod_{t=1}^{m}1=1,
\]
and
\[
\det(U_{m})=\prod_{t=1}^{m}(U_{m})_{tt}=\prod_{t=1}^{m}(-2)^{t-1}\binom{t}%
{t}=\prod_{t=1}^{m}(-2)^{t-1}=(-2)^{0+1+\dots+m-1}=(-2)^{\binom{m}{2}}.
\]
Thus we have
\[
\det(T_{m})=\det(L_{m})\det(U_{m})=(-2)^{\binom{m}{2}}\neq0\text{.}%
\]
Hence $T_{m}$ is invertible.
\end{proof}


Next, we establish the relationship between $\Cy$ and the matrix $T_m$; in particular, they are inverses of each other.
\begin{proposition} \label{prop:CT}
  We have that $\Cy = T_m^{-1}$.
\end{proposition}
\begin{proof}
  Recall from Defintion \ref{def:Cy} that $\Cy = V_mW_m$ where
\[
  (V_{m})_{rt}=
  \frac{(-1)^{r-1}}{2^{t-1}} \binom{t}{r}
\ \ \ \,\text{and \ }\
(W_m)_{ts} = (-1)^{t+s} \binom{m-s}{m-t}.
\]
  Recall again from Lemma \ref{lem:inv} that $T_m = L_m U_m$ where
\[
(L_{m})_{rt}=\left(
\begin{array}
[c]{c}%
m-t\\
r-t
\end{array}
\right)  \ \ \ \,\text{and \ }\ (U_{m})_{ts}=\left(  -2\right)  ^{t-1}\left(
\begin{array}
[c]{c}%
s\\
t
\end{array}
\right).
\]
Note that
\begin{align*}
  T_m^{-1} = (L_m U_m)^{-1} = U_m^{-1} L_m^{-1};
\end{align*}
hence, to show that $\Cy = T_m^{-1}$, it suffices to show that
\begin{enumerate}
\item $V_mU_m = I_m$
\item $L_m W_m = I_m$
\end{enumerate}
where $I_m$ denotes the $m \times m$ identity matrix.
\begin{enumerate}
\item$V_mU_m = I_m$:
  We have that the $(r,s)$-entry of $V_mU_m$ is
  \begin{alignat*}{2}
    (V_mU_m)_{rs} &= \sum_{t}^{} V_{rt} U_{ts} \\
                  &= \sum_{t}^{} \frac{(-1)^{r-1}}{2^{t-1}} \binom{t}{r} (-2)^{t-1} \binom{s}{t} \\
                  &= \sum_{t}^{} (-1)^{r + t} \binom{t}{r} \binom{s}{t} \\
                  &= \sum_{u}^{} (-1)^{u} \binom{u+r}{r} \binom{s}{u + r}&&  \qquad \text{by reindexing with $t = u + r$} \\
                  &= \sum_{u}^{} (-1)^u \frac{(u+r)!}{r!u!} \binom{s!}{(u+r)! (s-u-r)!} \\
                  &= \sum_{u}^{} (-1)^u \frac{s!}{r!u!(s-u-r)!} \cdot \frac{(s-r)!}{(s-r)!} &&  \qquad \text{by multiplying by $\frac{(s-r)!}{(s-r)!}$} \\
                  &= \sum_{u}^{} (-1)^u \frac{s!}{r! (s-r)!} \frac{(s-r)!}{u!(s-r-u)!} \\
                  &= \sum_{u}^{} (-1)^u \binom{s}{r} \binom{s-r}{u} \\
                  &= \binom{s}{r} \sum_{u}^{} (-1)^u \binom{s-r}{u} (1)^{s-r-u} \\
                  &= \binom{s}{r} (1-1)^{s-r} &&  \qquad \text{binomial expansion of $(1-1)^{s-r}$} \\
                  &=
                    \begin{cases}
                      1 & \text{if $s = r$} \\
                      0 & \text{if $s \ne r$}
                    \end{cases}  \\
                  &= \delta_{r,s}.
  \end{alignat*}
  Since the $(r,s)$-entry of $V_mU_m$ is $\delta_{r,s}$, it follows that $V_mU_m = I_m$.
\item $L_mW_m = I_m$:
  We have that the $(r,s)$-entry of $L_mW_m$ is
  \begin{alignat*}{2}
    (L_mW_m)_{rs} &= \sum_{t}^{} (L_m)_{rt} (W_m)_{ts} \\
                  &= \sum_{t}^{} \binom{m-t}{r-t} (-1)^{t + s} \binom{m-s}{m-t} \\
                  &= \sum_{u}^{} \binom{m-r+u}{u} (-1)^{r - u + s} \binom{m-s}{m-r+u} && \qquad \text{by reindexing with $t = r - u$} \\
                  &= \sum_{u}^{} (-1)^{r - u + s} \frac{(m-r+u)!}{u! (m-r)!} \binom{(m-s)!}{(m-r+u)! (r-s-u)!} \\
                  &= \sum_{u}^{} (-1)^{r - u + s} \binom{(m-s)!}{u! (m-r)! (r-s-u)!} \cdot \frac{(r-s)!}{(r-s)!} && \qquad \text{by multiplying by $ \frac{(r-s)!}{(r-s)!}$} \\
                  &= \sum_{u}^{} (-1)^{r - u + s} \frac{(r-s)!}{u! (r-s-u)!} \frac{(m-s)!}{(m-r)!(r-s)!} \\
                  &= \sum_{u}^{} (-1)^{r - u + s} \binom{r-s}{u} \binom{m-s}{m-r} \\
                  &= (-1)^{r + s} \binom{m-s}{m-r} \sum_{u}^{} (-1)^u \binom{r-s}{u} (1)^{r - s - u} \\
                  &= (-1)^{r + s} \binom{m-s}{m-r} (1-1)^{r - s} && \qquad \text{binomial expansion of $(1-1)^{r-s}$} \\
                  &=
                    \begin{cases}
                      1 & \text{if $r = s$} \\
                      0 & \text{if $r \ne s$}
                    \end{cases} \\
                  &= \delta_{r,s}.
  \end{alignat*}
  Since the $(r,s)$-entry of $L_mW_m$ is $\delta_{r,s}$, it follows that $L_mW_m = I_m$.
\end{enumerate}
Hence, we have shown that $V_mU_m = I_m$ and $L_mW_m = I_m$, and therefore
\[
  \Cy = V_m W_m
      = U_m^{-1} L_m^{-1}
      = (L_m U_m)^{-1}
      = T_m^{-1}.
\]
\end{proof}

\noindent
Now, we can summarize Lemmas \ref{lem:tdef} and \ref{lem:inv} and Proposition \ref{prop:CT} into the following lemma.

\begin{lemma}
\label{iff} We have
\[
\EC(F, G)  \ \ = \ \ \Cy \,\, y.
\]

\end{lemma}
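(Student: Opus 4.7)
The plan is to combine Lemmas \ref{Tdef} and \ref{inv} in essentially a two-line argument. The forward direction is nothing more than the statement of Lemma \ref{Tdef}, so no additional work is needed there. The reverse direction is the only content left to establish, and it reduces to an invertibility argument using Lemma \ref{inv}.

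For the converse, I would argue as follows. Fix $F$ and $G$ and set $c^{\star} := EC(F,G)$. The vector $q$ depends only on the eigenvalues $\alpha, \beta$ of $F$ and $G$ via item~4 of Notation \ref{notation}, so it is a determined quantity, independent of any candidate configuration we might compare against. Applying Lemma \ref{Tdef} to the pair $(F,G)$ yields $q = T_n c^{\star}$. The hypothesis $q = T_n c$ then gives $T_n c = T_n c^{\star}$, and invertibility of $T_n$ by Lemma \ref{inv} forces $c = c^{\star} = EC(F,G)$, as desired.

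I do not anticipate any real obstacle here: the lemma is essentially a packaging statement, and all of the substantive work has been done in Lemmas \ref{Tdef} and \ref{inv}. The conceptual role of invertibility is exactly to upgrade the one-directional implication of Lemma \ref{Tdef} into a genuine equivalence; without it, two distinct eigen configurations could in principle produce the same $q$-vector, and the auxiliary quantity $q$ would not suffice to identify $c$ from the entries of $F$ and $G$. This equivalence is what will let the subsequent argument trade the quantifier-laden condition $EC(F,G)=c$ for a condition on $q$, which in turn will be expressed in quantifier-free form via root counting of the polynomials $p_r$ appearing in Theorem \ref{main}.
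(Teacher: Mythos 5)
Your proposal is correct and follows essentially the same route as the paper: the forward direction is Lemma \ref{Tdef}, and the converse uses injectivity of $T_n$ (Lemma \ref{inv}) applied to $T_n c = q = T_n\,EC(F,G)$. If anything, your write-up makes explicit the step (introducing $c^{\star}=EC(F,G)$ and applying Lemma \ref{Tdef} to it) that the paper's terse proof leaves implicit.
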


\begin{proof}
Lemma
\ref{lem:inv} shows that $T_{m}$ is invertible, hence~$T_{m}$ is a 1-1 linear map.
Proposition \ref{prop:CT} shows that $\Cy = T_m^{-1}$.
Therefore, we have that$${T_m^{-1}y = \Cy \,\, y = c \implies \EC(F, G) = c.}$$
Together with Lemma \ref{lem:tdef}, this means that $\EC(F,G) = T_m^{-1} y = \Cy \,\, y$, and the lemma is proved.
\end{proof}

At this point, we have found an equivalent condition for $\EC(F,G)=c$. However,
this condition still contains quantifiers, as the vector $y$ (via the $h$ polynomials) is still computed using the eigenvalues of~$F$~and~$G$. The remainder of the derivation will focus on solving this issue by providing a way to count the positive roots of the $h$ polynomials without referring to the eigenvalues.




Recall that, for each $r \in [m]$, the polynomial $h_{r}$ is symmetric with only real roots. This is the
crucial fact which allows us to construct another polynomial $D_{r}$ involving
only the \textit{parameters} of $F$ and $G$ rather than their \textit{eigenvalues}.
This will give us a quantifier-free condition.

\begin{proposition}
\label{ftsp} Let $\alpha = (\alpha_1, \dots, \alpha_m)$, and similarly for $\beta$, $a$, and $b$. For each $r \in [m]$, there exists a polynomial~${D_{r}\in\mathbb{R}[a,b][x]}$
 such that
\[
  D_r(a,b,x) \,\, = \,\, h_r(\alpha, \beta, x).
\]

\end{proposition}

\begin{proof}
  Let $r \in [m]$ be arbitrary but fixed.
  \ For the purposes of this proof, we view $h_{r}$ as being a polynomial in the variables
  \begin{align*}
    \alpha &= (\alpha_{1}, \dots, \alpha_{m}), \\
    \beta &= ( \beta_{1}, \dots, \beta_{n}), \\
    x &,
  \end{align*}
  with real number coefficients.
  Recall from Definition \ref{def:A} that
  \[
    h_r = 
   \prod_{\substack{I \subset [m], \,\,\#I = r \\ j \in [n] }}
         \left(
          x+\prod_{i \in I}\left(  \alpha_{i}-\beta_{j}\right)  \right).
  \]
  Note that by construction $h_{r}$ is symmetric in both $\alpha$ and $\beta$.
  We will now apply the Fundamental Theorem of Symmetric Polynomials.
  For this, let $e_k(\alpha)$ denote the $k$-th elementary symmetric polynomial in the variables $\alpha = (\alpha_1, \dots, \alpha_m)$; that is,
  \[
    e_k(\alpha) = \sum_{1 \le i_1 < \dots < i_k \le m} \alpha_{i_1} \cdots \alpha_{i_k}.
  \]
  Let $e(\alpha)$ denote the list of all elementary symmetric polynomials; i.e.,
  \[
    e(\alpha) = (e_1(\alpha), \dots, e_m(\alpha)),
  \]
  and respectively for $e_k(\beta)$ and $e(\beta)$.

  Recall that by the Fundamental Theorem of Symmetric Polynomials, since $h_r$ is symmetric in $\alpha$ and $\beta$ this means that there exists a polynomial 
$$D_r\in \mathbb{Z}[y_{1},\dots,y_{m}, \,\, z_1, \dots, z_n, \,\, x]$$
such that
\[
D_r(e(\alpha), \,\, e(\beta), \,\, x)=h_{r}(\alpha, \, \beta, \, x).
\]
Recall from Definition \ref{def:A} that $a = (a_1, \dots, a_m)$ and $b = (b_1, \dots, b_m)$ were the coefficients of the respective characteristic polynomials $f$ and $g$ of $F$ and $G$ labelled so that
\begin{alignat*}{2}
f &= \det(xI_{m} - F)  && = x^{m} - a_{1}x^{m-1} + a_{2} x^{m-2} - \cdots+ (-1)^{m}
a_{m} x^{0}\\
g &= \det(xI_{n} - G)  &&  = x^{n} - b_{1}x^{n-1} + b_{2} x^{n-2} - \cdots+ (-1)^{n}
b_{n} x^{0}.
\end{alignat*}
With this labelling of the coefficients of $f$ and $g$, we have that
\begin{align*}
a_{1}  &  =\sum_{1\leq i_{1}\leq m}\alpha_{i_{1}}=\alpha_{1}+\cdots+\alpha
_{m}=e_{1}(\alpha)\\
a_{2}  &  =\sum_{1\leq i_{1}<i_{2}\leq m}\alpha_{i_{1}}\alpha_{i_{2}}%
=e_{2}(\alpha)\\
&  \vdots\\
a_{m}  &  =\sum_{1\leq i_{1}<\cdots<i_{m}\leq m}\alpha_{i_{1}}\cdots
\alpha_{i_{m}}=\alpha_{1}\cdots\alpha_{m}=e_{m}(\alpha).
\end{align*}
Similarly
\begin{align*}
b_{1}  &  =e_{1}(\beta)\\
&  \vdots\\
b_{n}  &  =e_{n}(\beta).
\end{align*}

\noindent Hence, we have that
\begin{align*}
h_{r}(\alpha,\beta,x)  &  = D_{r}(
\underbrace{e_{1}(\alpha)}_{a_{1}}, \dots,  \underbrace{e_{m}%
(\alpha)}_{a_{m}}, \,\, \underbrace{e_{1}(\beta)}_{b_{1}},  \dots, 
\underbrace{e_{n}(\beta)}_{b_{n}}, \,\, x)\\
&  =D_{r}(a,b,x) \in \mathbb{R}[a,b][x].
\end{align*}

\end{proof}

\noindent With that, we are ready to prove the result for this section (Theorem \ref{thm:generic}).

\begin{proof}
[\textbf{Proof of Main Result for Generic Pairs (Theorem \ref{thm:generic})}] \ 

Let $y=%
\begin{bmatrix}
y_{1}\\
\vdots\\
y_{m}%
\end{bmatrix}
=T_{m}c$. For each $r\in [m]$, by Proposition \ref{ftsp} there exists
$D_{r}\in\mathbb{R}[a,b][x]$ such that
\[
D_r(a, b, x) = h_{r}(\alpha,\beta,x).
\]
Recall that, by definition of $y$, we have
\begin{alignat*}{2}
y_{r}  &  =\#\text{ positive roots of $h_{r}(x)$, counting multiplicity}\\
       &  =\#\text{ positive roots of $D_r(x)$, counting multiplicity} \qquad && \text{ by Proposition \ref{ftsp}} \\
       &  =v(D_{r}(x)) && \text{ by Descartes' rule of signs, since all roots of $D_r$ are real.}
\end{alignat*}
Next, note that by construction of $D_r$, we have that $\tdeg D_r = 0$ since $F$ and $G$ do not share any eigenvalues.
Hence
\[
  y_r \;\;=\;\; v(D_r(x)) \\
      \;\;=\;\; v(D_r(x)) + \frac{\tdeg D_r}{2} \\
      \;\;=\;\; (\Ay(F,G))_r.
\]
By the above and Lemma \ref{iff}, we have that
\[
\EC(F,G)  \;\;=\;\; T_{m}^{-1} y\\
\;\; =\;\; T_{m}^{-1}
\begin{bmatrix}
  v(D_1) \\ \vdots \\ v(D_m)
\end{bmatrix} \\
                \;\;=\;\; \Cy \,\, \Ay (F,G).
\]
Thus
Theorem \ref{thm:generic} is proved.
\end{proof}

\subsection{Proof for arbitrary pairs of matrices}
\label{sec:arbitrary}
In this section, we will generalize Theorem~\ref{thm:generic} from the previous section to arbitrary pairs of matrices. 
Namely, we will prove the main result (Theorem \ref{thm:main}) for this paper (restated below).

\begin{theorem*}[Main Result]\label{thm:arbitrary} Let $F\in\mathbb{R}^{m\times m}$ and
$G\in\mathbb{R}^{n\times n}$ be \textbf{arbitrary} real symmetric matrices. We have%
\[
  \EC\left(  F,G\right) \,\,\;  = \; \,\,  \Cy\;\;\Ay(F,G).
  \]
\end{theorem*}


\begin{example} \label{ex:arbitrary-sym0}
    Let $F \in \mathbb{R}^{3 \times 3}$ and $G \in \mathbb{R}^{2 \times 2}$ be real symmetric matrices such that their corresponding eigenvalues are
    \[
      \alpha = (0,1,1), \qquad \beta = (1,2).
    \]
    Pictorially, the eigenvalue arrangement is as shown below.
    \[
      \begin{tikzpicture}
        \draw (-1, 0) -- (3, 0);
        \fill [red] (0,0) circle(3pt) node[label=$\alpha_1$]{}; \fill [red]
        (1,0) circle(3pt) node[label=$\alpha_2$]{}; \fill [red] (1,0.75)
        circle(3pt) node[label=$\alpha_3$]{}; \fill [blue] (1,1.5)
        circle(3pt) node[label=$\beta_1$]{}; \fill [blue] (2,0)
        circle(3pt) node[label=$\beta_2$]{};
      \end{tikzpicture} \]
    In Example~\ref{ex:g1}, we computed directly from Definition~\ref{def:EC} that
    \[
      \EC(F,G) = \begin{bmatrix}
                 1/4 \\ 1/2 \\ 5/4.
               \end{bmatrix}
      \]
      On the other hand, we compute
      \begin{align*}
        \Cy &=
              \left[\begin{array}{ccc}
-\frac{1}{4} & \frac{1}{4} & \frac{3}{4} 
\\
 \frac{1}{4} & \frac{1}{4} & -\frac{3}{4} 
\\
 \frac{1}{4} & -\frac{1}{4} & \frac{1}{4} 
        \end{array}\right] \\
        \Ay(F,G) &=
                   \begin{bmatrix}
                     \overline{v}(D_1) \\
                     \overline{v}(D_2) \\
                     \overline{v}(D_3) 
                   \end{bmatrix} =
              \begin{bmatrix}
                \overline{v} \left(x^6 - 5x^5 + 9x^4 - 7x^3 + 2x^2 \right) \\
                \overline{v} \left( x^6 + 5x^5 + 8x^4 + 4x^3 \right) \\
                \overline{v} \left( x^2 - 2x \right)
              \end{bmatrix}.
      \end{align*}
      For example, consider $\overline{v} \left( x^6 + 5x^5 + 8x^4 + 4x^3 \right)$.
      We have
      \begin{align*}
        \overline{v}\left( x^6 + 5x^5 + 8x^4 + 4x^3 \right) &= v(x^6 + 5x^5 + 8x^4 + 4x^3) + \frac{\tdeg(x^6 + 5x^5 + 8x^4 + 4x^3)}{2} \\
                                                            &= 0 + \frac{3}{2} \\
                                                            &= 3/2.
      \end{align*}
      Repeating this process for the other two entries of $\Ay(F,G)$, we find that
      \begin{align*}
        \Ay(F,G) &= \begin{bmatrix}
          5 \\
          3/2 \\
          3/2
              \end{bmatrix}.
      \end{align*}
      Observe now that
      \[
        \Cy \Ay(F,G) \;\;\;=\;\;\; 
              \left[\begin{array}{ccc}
-\frac{1}{4} & \frac{1}{4} & \frac{3}{4} 
\\
 \frac{1}{4} & \frac{1}{4} & -\frac{3}{4} 
\\
 \frac{1}{4} & -\frac{1}{4} & \frac{1}{4} 
        \end{array}\right] 
        \begin{bmatrix}
          5 \\
          3/2 \\
          3/2
        \end{bmatrix} \\
        \;\;\;=\;\;\; 
          \begin{bmatrix}
            1/4 \\ 1/2 \\ 5/4
          \end{bmatrix} \\
                \;\;\;=\;\;\;  \EC(F,G).
      \]
      \exend
\end{example}

\begin{example}
    Let $F \in \mathbb{R}^{3 \times 3}$ and $G \in \mathbb{R}^{3 \times 3}$ be real symmetric matrices such that their corresponding eigenvalues are
    \[
      \alpha = (0,0,0), \qquad \beta = (0,0,0).
    \]
    Pictorially, the eigenvalue arrangement is as shown below.
    \[
      \begin{tikzpicture}
        \draw (-1, 0) -- (1, 0);
        \fill [red] (0,0) circle(3pt) node[label=$\alpha_1$]{};
        \fill [red]
        (0,1) circle(3pt) node[label=$\alpha_2$]{};
        \fill [red] (0,2)
        circle(3pt) node[label=$\alpha_3$]{};
        \fill [blue] (0,3)
        circle(3pt) node[label=$\beta_1$]{};
        \fill [blue] (0,4)
        circle(3pt) node[label=$\beta_2$]{};
        \fill [blue] (0,5.0)
        circle(3pt) node[label=$\beta_3$]{};
      \end{tikzpicture} \]
    Using Definition \ref{def:EC}, one can compute that
    \[
      \EC(F,G) = \begin{bmatrix}
        9/8 \\ 9/8 \\ 3/8
               \end{bmatrix}.
      \]
      On the other hand, we compute
      \begin{align*}
        \Cy &=
              \left[\begin{array}{ccc}
-\frac{1}{4} & \frac{1}{4} & \frac{3}{4} 
\\
 \frac{1}{4} & \frac{1}{4} & -\frac{3}{4} 
\\
 \frac{1}{4} & -\frac{1}{4} & \frac{1}{4} 
        \end{array}\right] \\
        \Ay &=
              \begin{bmatrix}
                \overline{v} \left(x^9 \right) \\
                \overline{v} \left( x^9  \right) \\
                \overline{v} \left( x^3 \right)
              \end{bmatrix}.
      \end{align*}
      For example, consider $\overline{v} \left( x^9 \right)$.
      We have
      \begin{align*}
        \overline{v}\left( x^9 \right) &= v(x^9) + \frac{\tdeg(x^9)}{2} \\
                                                            &= 0 + 9/2 \\
                                       &= 9/2.
      \end{align*}
      Repeating this process for the other two entries of $\Ay$, we find that
      \begin{align*}
        \Ay &= \begin{bmatrix}
          9/2 \\ 9/2 \\ 3/2
              \end{bmatrix}.
      \end{align*}
      Observe now that
      \[
        \Cy \Ay \;\;\;=\;\;\; 
              \left[\begin{array}{ccc}
-\frac{1}{4} & \frac{1}{4} & \frac{3}{4} 
\\
 \frac{1}{4} & \frac{1}{4} & -\frac{3}{4} 
\\
 \frac{1}{4} & -\frac{1}{4} & \frac{1}{4} 
        \end{array}\right] 
        \begin{bmatrix}
          9/2 \\ 9/2 \\ 3/2
        \end{bmatrix} \\
        \;\;\;=\;\;\;
          \begin{bmatrix}
            9/8 \\ 9/8 \\ 3/8
          \end{bmatrix} \\
                \;\;\;=\;\;\; \EC(F,G).
      \]
      \exend
\end{example}

  To prove Theorem \ref{thm:arbitrary}, we take the following approach.

\begin{enumerate}
\item First, in Lemma \ref{lem:decomp} we reduce to the case where $n=1$ (i.e., where $G$ is a $1 \times 1$ matrix).
\item Second, in Lemma \ref{lem:single-sym} we prove the above theorem in the case where $n=1$.
\item Finally, in Lemma \ref{lem:combine-sym}, we build back up from the $n=1$ case to arbitrary $n$.
  The proof is then complete.
\end{enumerate}
Our proof strategy is illustrated by the following diagram.
\[
\begin{array}{|ccc|}
  \hline
  & &\\
  \EC(F,G) & \overset{{\color{green}?}}{{\color{green}=\mathrel{\mkern-3mu}=}} & \Cy\,\,\Ay(F,G) \\[1.5ex]
  \text{Lemma \ref{lem:decomp}}\hspace{1em}\Big\Vert \hspace{6em}        &  & \hspace{6em} \Big\Vert \hspace{1em} \text{Lemma \ref{lem:combine-sym}} \\[1.5ex]
  \sum_{j=1}^{n} \EC(F, [\beta_j]) &  =\mathrel{\mkern-3mu}= & \sum_{j=1}^{n} \Cy \Ay(F, [\beta_j])\\[1.5ex]
   & \text{Lemma \ref{lem:single-sym}} & \\
  &&\\
                                                                            \hline
\end{array}
\]

First, we reduce to the case where $n = 1$ (i.e., where $G$ is a $1 \times 1$ matrix).

\begin{lemma}[Decompose $\EC$ into eigenvalues]
    \label{lem:decomp}
    For all real symmetric $F$~and~$G$ we have
    \[
      \EC(F,G) = \sum_{j=1}^{n} \EC(F, [\beta_j]).
    \]
\end{lemma}

  \begin{proof}[Proof of Lemma~\ref{lem:decomp}]
    We break the proof into two cases.
    \begin{description}
    \item[Case 1: $F$~and~$G$ are generic.] Recall Definition~\ref{def:GEC}:
      \begin{align*}
        \EC(F,G) = \GEC(F,G) = (c_1, \dots, c_m) \text{ where  }{c_t = \# \{j : \beta_j \in A_t\}}.
      \end{align*}
      Note that 
      \[
        c_t \;\;\;=\;\;\; \# \{j : \beta_j \in A_t\} \\
            \;\;\;=\;\;\; \sum_{j=1}^{n}
              \begin{cases}
                1 & \text{ if $\beta_j \in A_t$} \\
                0 & \text{ else},
              \end{cases}
      \]
\noindent      and the summand is exactly the $t$-th component of $\EC(F, [\beta_j])$.

    \item[Case 2: $F$~and~$G$ are not generic.] Recall Definition~\ref{def:EC}:
      \begin{align*}
        \EC(F,G) &= \frac{1}{2^m} \sum_{d \in \{-1,1\}^m} \GEC(F_d, G).
      \end{align*}
      Note that the pair $F_d$ and $G$ is generic.
      Then we have
      \begin{alignat*}{2}
        \EC(F,G) &= \frac{1}{2^m} \sum_{d \in \{-1,1\}^m} \GEC(F_d, G) && \text{by Definition~\ref{def:EC}} \\
                &= \frac{1}{2^m} \sum_{d \in \{-1,1\}^m} \sum_{j=1}^{n} \GEC(F_d, [\beta_j]) && \text{by Case 1}\\
                &=  \sum_{j=1}^{n} \underbrace{\frac{1}{2^m} \sum_{d \in \{-1,1\}^m} \GEC(F_d, [\beta_j])}_{\text{$=\EC(F, [\beta_j])$ by Definition~\ref{def:EC}}} \qquad \\
                &= \frac{1}{2^m} \sum_{j=1}^{n} \EC(F, [\beta_j]).
      \end{alignat*}
    \end{description}
  \end{proof}

Next, we prove the above theorem in the case where $n=1$.
\begin{lemma}
  \label{lem:single-sym}
  Let $F \in \mathbb{R}^{m \times m}$ be arbitrary and let $\beta \in \mathbb{R}$.
  Let $r \in [m]$ be arbitrary.
  Then we have
  \[
    \EC(F, [\beta]) = \Cy \Ay (F, [\beta]).
    \]
    \end{lemma}

    \begin{example}
      Let $F = \diag(0,1,1)$ and let $\beta = 1$.
      One can use Definition \ref{def:EC} to compute
      \[
        \EC(F, [\beta]) =
        \begin{bmatrix}
          1/2 \\ 1/4 \\ 1/2
        \end{bmatrix}.
      \]
      On the other hand, using the definitions of $\Cy$ (from Definition \ref{def:Cy}) and $\Ay$ (from Definition \ref{def:A}), we have 
      \begin{align*}
        \Cy \Ay (F, [\beta]) &=
                           \left[\begin{array}{ccc}
-\frac{1}{4} & \frac{1}{4} & \frac{3}{4} 
\\
 \frac{1}{4} & \frac{1}{4} & -\frac{3}{4} 
\\
 \frac{1}{4} & -\frac{1}{4} & \frac{1}{4} 
\end{array}\right]
                           \begin{bmatrix}
                             \overline{v}(D_1) \\
                             \overline{v}(D_2) \\
                             \overline{v}(D_3)
                           \end{bmatrix}
      \end{align*}
      Note that for the purposes of easier numeric computation and reasoning, we can replace $D_r$ in the above with~$h_r$ since we know the eigenvalues of $F$ and $G = [\beta]$.
      With this we can compute

        \[
        \begin{bmatrix}
          \overline{v}(D_1) \\
          \overline{v}(D_2) \\
          \overline{v}(D_3)
        \end{bmatrix} 
        \;=\;
        \begin{bmatrix}
          \overline{v}(h_1) \\
          \overline{v}(h_2) \\
          \overline{v}(h_3)
        \end{bmatrix}  \\
        \;=\;
        \begin{bmatrix}
          {v}(h_1) + \frac{\tdeg(h_1)}{2} \\
          {v}(h_2) + \frac{\tdeg(h_2)}{2} \\
          {v}(h_3) + \frac{\tdeg(h_3)}{2}
        \end{bmatrix}  \\
        \;=\;
                        \begin{bmatrix}
                          v(x^3-x^2) + \frac{\tdeg(x^3 - x^2)}{2} \\
                          v(x^3) + \frac{\tdeg(x^3)}{2} \\
                          v(x) + \frac{\tdeg(x)}{2}
                        \end{bmatrix} \\
        \;=\;
                        \begin{bmatrix}
                          1 + 2/{2} \\
                          0 + {3}/{2} \\
                          0 + {1}/{2}
                        \end{bmatrix} \\
        \;=\;
                        \begin{bmatrix}
                          2 \\ {3}/{2} \\ {1}/{2}
                        \end{bmatrix},
      \]
      and therefore
      \[
        \Cy \Ay(F, [\beta]) \;\;=\;\; 
                           \left[\begin{array}{ccc}
-\frac{1}{4} & \frac{1}{4} & \frac{3}{4} 
\\
 \frac{1}{4} & \frac{1}{4} & -\frac{3}{4} 
\\
 \frac{1}{4} & -\frac{1}{4} & \frac{1}{4} 
\end{array}\right]
                        \begin{bmatrix}
                          2 \\ {3}/{2} \\ {1}/{2}
                        \end{bmatrix} \\
                        \;\;=\;\; 
                          \begin{bmatrix}
                            1/2 \\ 1/4 \\ 1/2
                          \end{bmatrix} \\
                        \;\;=\;\; \EC(F, [\beta]).
      \]
      \exend
    \end{example}

    \begin{proof}[Proof of Lemma \ref{lem:single-sym}]
      If $F$ does not have $\beta$ as an eigenvalue (i.e. $F$ and $[\beta]$ are a generic pair), then the lemma follows trivially by Theorem \ref{thm:generic}.
      Thus, for the remainder of the proof, suppose that $\beta$ is an eigenvalue of $F$.
      By definition \ref{def:EC}, the left-hand side of the lemma claim is
      \begin{alignat*}{2}
        \EC(F, [\beta])
        &= \frac{1}{2^m}\sum_{d \in \{-1, 1\}^m} \GEC(F_d, [\beta]) \\
        &= \frac{1}{2^m}\sum_{d \in \{-1, 1\}^m} \Cy \Ay (F_d, [\beta]) && \text{ since $F_d$ and $[\beta]$ are a generic pair} \\
        &=\Cy \frac{1}{2^m}\sum_{d \in \{-1, 1\}^m}  \Ay (F_d, [\beta]) \qquad && \text{by linearity}.
      \end{alignat*}
      Let $h_{r,d}$ (resp. $D_{r,d})$ be the polynomial constructed via Definition \ref{def:A} for the perturbed matrix~$F_d$ and the matrix $G = [\beta]$; that is,
      \begin{align*}
        h_{r,d}&= \prod_{\substack{I \subset [m]\\ \#I = r }} \left(x+\prod_{i \in I}\left(  (\alpha_{i} + \varepsilon d_i) -\beta\right)  \right),
      \end{align*}
      and respectively for $D_{r,d}$ (being those polynomials $h_{r,d}$ expressed in terms of the entries of $F$ and $G$ using the FTSP).
      Then for $r \in [m]$ we have
      \begin{alignat*}{2}
        (\Ay (F_d, [\beta]))_r &= v(D_{r,d}) \\
                           &= v(h_{r,d}) \qquad && \text{since by Def. \ref{def:A} $D_{r,d}(p) = h_{r,d}(\alpha, \beta)$}.
      \end{alignat*}
      Now, we examine the right-hand side of the lemma claim.
      Recall that $\Ay(F,G)$ is defined differently for generic pairs versus arbitrary pairs of matrices.
      Note that for $r \in [m]$ we have
      \begin{alignat*}{2}
        (\Ay(F, [\beta]))_r &= \overline{v}(D_r) \\
                        &= v(D_r) + \frac{\tdeg(D_r)}{2} \qquad && \text{by the definition in Theorem \ref{thm:arbitrary}} \\
                        &= v(h_r) + \frac{\tdeg(h_r)}{2} && \text{by the definition of $D_r$}.
      \end{alignat*}
      Let $r \in [m]$ be arbitrary but fixed. To prove the lemma, it therefore suffices to show that
        \begin{align*}
    \frac{1}{2^m} \sum_{d \in \{-1, 1\}^m} v(h_{r,d}) = v(h_r) + \frac{\tdeg(h_r)}{2}.
        \end{align*}
        We approach this proof by studying the left-hand side of the above.

        \medskip
        
      \noindent Recall that
      \begin{align*}
        h_{r,d}&= \prod_{\substack{I \subset [m]\\ \#I = r }} \left(x+\prod_{i \in I}\left(  (\alpha_{i} + \varepsilon d_i) -\beta\right)  \right).
      \end{align*}
      Let $M = \{i : \alpha_i = \beta\}$ and let 
      \begin{align*}
        h^=_{r,d}&= \prod_{\substack{I \subset [m] \\\#I = r \\ I \cap M \ne \emptyset  }} \left(x+\prod_{i \in I}\left(  (\alpha_{i} + \varepsilon d_i) -\beta\right)  \right) \\
        h^{\ne}_{r,d}&= \prod_{\substack{I \subset [m]\\\#I = r \\ I \cap M = \emptyset  }} \left(x+\prod_{i \in I}\left(  (\alpha_{i} + \varepsilon d_i) -\beta\right)  \right) .
      \end{align*}
      Then, clearly we have $h_{r,d} = h^{\ne}_{r,d} h^=_{r,d}.$
      Informally speaking, the polynomial $h^{\ne}_{r,d}$ comprises the terms of $h_{r,d}$ which are invariant under perturbation, because it only involves $\alpha$'s which do not coincide with $\beta$. The polynomial $h^=_{r,d}$ collects all the terms of $h_{r,d}$ which contain at least one $\alpha$ which coincides with $\beta$, and therefore comprises all terms which are affected by perturbation and depend on $d$.

     \medskip

      \noindent For each $I \subset [m]$ with $\#I = r$ and $I \cap M \ne \emptyset$, let
      \begin{align*}
        I_- &= \{i \in I : \alpha_i < \beta \} \\
        I_0 &= \{i \in I : \alpha_i = \beta \} \\
        I_+ &= \{i \in I : \alpha_i > \beta \}.
      \end{align*}
      Then, by construction $I_0 \ne \emptyset$ and $I = I_- \uplus I_0 \uplus I_+$ and so we have
      \begin{align*}
        h^=_{r,d}&= \prod_{\substack{I \subset [m]\\\#I = r \\ I \cap M \ne \emptyset  }} \left(x+\prod_{i \in I_-}\left(  (\alpha_{i} + \varepsilon d_i) -\beta\right) \prod_{i \in I_+}\left(  (\alpha_{i} + \varepsilon d_i) -\beta\right)  \prod_{i \in I_0}\left(  (\alpha_{i} + \varepsilon d_i) -\beta\right)\right).
      \end{align*}
      Let
      \begin{align*}
        p(h^=_{r,d}) &= \# \text{ positive roots of }h^=_{r,d}.
      \end{align*}
      Then, from the above we have
      \begin{align*}
        p(h^=_{r,d}) &= \# \{ I : -(-1)^{\# I_-} (1)^{\# I_+} \prod_{i \in I_0} d_i = 1 \}.
      \end{align*}
      Note that since $h^=_{r,d}$ is a polynomial with only real roots, by Descartes' rule of signs we have that $v(h^=_{r,d}) = p(h^=_{r,d})$.
      Hence it suffices to show that
      \[
        \frac{1}{2^m} \sum_{d \in \{-1, 1\}^m} p(h_{r,d}) = p(h_{r,d}) + \frac{ \tdeg(h_{r,d})}{2}.
      \]
      We first prove the following claim.

      \medskip
      
      \noindent\textbf{Claim:} We have
      $$\frac{1}{2^m} \sum_{d \in \{-1,1\}^m} p(h^=_{r,d}) = \underset{d \in \{-1,1\}^m}{\operatorname*{average}} \,\, p(h^=_{r,d}) \,\, = \,\, \frac{\deg h^=_{r,d}}{2}.$$

      \medskip

      \noindent \textit{Proof of claim:}
      For a concise notation, let $S_{I, d} = -(-1)^{\# I_-} (1)^{\# I_+} \prod_{i \in I_0} d_i$.
      With this notation, we then have
      \begin{equation} \label{eq:psid}
        p(h^=_{r,d}) = \# \{ I : S_{I,d} = 1 \}.
      \end{equation}
      Note that since $\alpha_i + \varepsilon d_i \ne \beta$ for all $i$, we have that $h^=_{r,d}$ has only nonzero real roots.
      Hence, for each $I$ and $d$, we have $S_{I,d} = \pm 1$.
      Thus, to prove the claim, it suffices to show that
      \[
        \# \{(I, d) : S_{I,d} = 1\} \,\,\, =  \,\,\,
        \# \{(I, d) : S_{I,d} = -1\}.
      \]
      It suffices to show that for every fixed $I$ and $d$, there exists a unique $d'$ so that
      \[
        S_{I,d} = -S_{I, d'}.
      \]
      Now let
      \begin{align*}
        I &\subset [m] \text{ such that $\#I = r$ and $I \cap M \ne \emptyset$} \\
        d &\in \{-1,1\}^m
      \end{align*}
      be fixed.
      Recall that $I_0 = \{i : \alpha_i = \beta \} = I \cap M \ne \emptyset$.
      Now let $k$ be the smallest element of $I_0$.
      Then let
      \[
        d' = (d_1, \dots, d_{k-1}, -d_k, d_{k+1}, \dots, d_m);
      \]
      i.e., let $d'$ be a copy of $d$ but with the sign of the $k$-th entry flipped.
      Obviously, the map $d \mapsto d'$ is a 1-1 correspondence.
      Further, clearly $S_{I,d'} = -S_{I, d}$, since $d$ and $d'$ differ by exactly one sign in the $k$th slot, and $k \in I_0$. 
      Hence, we have that
      \begin{equation*}
        \# \{(I, d) : S_{I,d} = 1\} \,\,\, =  \,\,\,
        \# \{(I, d) : S_{I,d} = -1\};
      \end{equation*}
      in other words, for fixed $I$, we have that exactly half of the $d$ vectors give $S_{I,d} = 1$ while the other half give $-1$.
      Since the size of the set $\{-1,1\}^m$ is $2^m$, this means that for fixed $I$, we have
      \begin{equation}
        \label{eq:sid}
        \#\{ d \in \{-1,1\}^m : S_{I,d} = 1 \} = \frac{1}{2} 2^m = 2^{m-1}.
      \end{equation}
      Now, note that
      \begin{alignat*}{5}
        \frac{1}{2^m} \sum_{d \in \{-1,1\}^m} p(h^=_{r,d}) &=
                                                        \frac{1}{2^m} &&\sum_{d \in \{-1,1\}^m} &&\# \{I : S_{I,d} = 1\}  && && \text{by (\ref{eq:psid}) } \\
                                                      &= \frac{1}{2^m} &&\sum_{d \in \{-1,1\}^m} \;\; &&\sum_{\substack{I \subset [m]\\ \#I = r\\ I \cap M \ne \emptyset}} &&\delta_{S_{I,d}, 1} && \text{counting becomes sum} \\
                                                      &= \frac{1}{2^m} &&\sum_{\substack{I \subset [m]\\ \#I = r\\ I \cap M \ne \emptyset}} \;\; &&\sum_{d \in \{-1,1\}^m} &&\delta_{S_{I,d}, 1} \qquad && \text{switching the order of summation}\\
                                                      &= \frac{1}{2^m} &&\sum_{\substack{I \subset [m]\\ \#I = r\\ I \cap M \ne \emptyset}} \;\; &&2^{m-1} &&&& \text{by (\ref{eq:sid}) } \\
                                                      &= \frac{1}{2}  &&\sum_{\substack{I \subset [m]\\ \#I = r\\ I \cap M \ne \emptyset}} &&1 \\
                                                      &= \frac{1}{2} &&\deg h^=_{r,d}.
      \end{alignat*}
      Note that the key step in the above was switching the order of summation and summing over $d$ inside first.
      This is because it is very difficult to reason when the summation over $I$ is inside (i.e. reasoning with fixed~$d$); it is much easier to reason that, for fixed $I$, exactly half of the pairs~$(I,d)$ have $S_{I,d} = 1$. We explain this idea more in depth in Example \ref{ex:sid}.
      With this, the claim is proved.

      \medskip

      To finish the proof of the lemma, recall that 
        $$h^{\ne}_{r,d}= \prod_{\substack{I \subset [m] \\ \#I = r \\ I \cap M = \emptyset  }} \left(x+\prod_{i \in I}\left(  (\alpha_{i} + \varepsilon d_i) -\beta\right)  \right).$$
        Note that since $h^{\ne}_{r,d}$ involves only $\alpha$'s which do not coincide with $\beta$, by a sufficiently small choice of $\varepsilon$ we have that
        \[
          \sign((\alpha_i + \varepsilon d_i) - \beta) = \sign(\alpha_i - \beta).
        \]
        Therefore
        \begin{alignat*}{2}
          p(h^{\ne}_{r,d}) &= v(h^{\ne}_{r,d}) \qquad && \text{by Descartes' rule of signs} \\
                         &= v(h_{r}) && \text{by the discussion above} \\
                         &= p(h_r) && \text{by Descartes' rule of signs, again.}
        \end{alignat*}
        Putting everything together, we therefore have
      \begin{alignat*}{2}
       \frac{1}{2^m} \sum_{d \in \{-1,1\}^m} v(h_{r,d}) &= \frac{1}{2^m} \sum_{d \in \{-1,1\}^m} p(h_{r,d}) && \text{by Descartes' rule of signs} \\
                   &=\frac{1}{2^m} \sum_{d \in \{-1,1\}^m}  p(h^{\ne}_{r,d}) + p(h^=_{r,d}) \qquad && \text{since $h_{r,d} = h^{\ne}_{r,d} h^=_{r,d}$} \\
                                                   &=\frac{1}{2^m} \sum_{d \in \{-1,1\}^m}  p(h_{r}) + p(h^=_{r,d}) && \text{since $p(h^{\ne}_{r,d}) = p(h_r)$, by the discussion above} \\
                                                   &= p(h_{r}) + 
                                                   \frac{1}{2^m} \sum_{d \in \{-1,1\}^m} p(h^=_{r,d}) && \text{since $p(h_r)$ does not depend on $d$}\\
                                                   &= p(h_{r}) + 
                                                     \frac{\deg h^=_{r,d}}{2} && \text{by the claim}. \\
        \end{alignat*}
        If we (by slight abuse of notation) substitute $d = (0, \dots, 0)$ into the expression for $h_{r,d}$, we find that
        \begin{alignat*}{2}
          \tdeg (h_{r}) &= \tdeg (h_{r,0}) \\
                        &= \tdeg \left( h^{\ne}_{r,0} \right) + \tdeg \left( h^=_{r,0} \right) \qquad && \text{since $h_{r,0} = h^{\ne}_{r,0} h^=_{r,0}$} \\
                        &= 0 + \tdeg \left( h^=_{r,0} \right) && \text{since all roots of $h^{\ne}_{r,0}$ are nonzero} \\
                        &= 0 + \deg \left( h^=_{r,d} \right) && \text{ $\forall d$, since plugging in  $d \ne 0$ preserves degree} \\
                        &= \deg (h^=_{r,d}).
        \end{alignat*}
        Finally, if we continue from above, we have
        \begin{alignat*}{2}
          p(h_{r}) + \frac{\deg h^=_{r,d}}{2}
                                                   &= p(h_r) + \frac{\tdeg(h_r)}{2} \qquad && \text{by the above} \\
                                                   &= v(h_r) + \frac{\tdeg(h_r)}{2} && \text{by Descartes' rule of signs} \\
                                                   &= \overline{v}(h_r).
      \end{alignat*}
      Thus we have shown that
        \begin{align*}
    \frac{1}{2^m} \sum_{d \in \{-1, 1\}^m} v(h_{r,d}) = v(h_r) + \frac{\tdeg(h_r)}{2},
        \end{align*}
        which means that
  \[
    \EC(F, [\beta]) = \Cy \Ay (F, [\beta]).
    \]
      Thus the lemma is proved.
    \end{proof}
    \begin{example} \label{ex:sid}
      Since the proof of Lemma \ref{lem:single-sym} is quite long and complicated, let us provide an example to better illustrate the key idea which led to the above proof.
      The crucial observation in the above proof was (\ref{eq:sid}); that is, over all $I$ and $d$, exactly half give $S_{I,d} = 1$.  

      Let us consider an example where $F = \diag(0,0,1)$ and $\beta = 0$.
      Using the notation established in the proof above, we have $m = 3$ and $M = \{i : \alpha_i = \beta\} = \{1, 2\}$.
      We will consider the case where~$r = 2$.
      Note
      \begin{align*}
        h^=_{2,d} = &\prod_{\substack{I \subset [3] \\ \# I = 2 \\ I \cap \{1,2\} \ne \emptyset}} \left( x + \prod_{i \in I} (\alpha_i + \varepsilon d_i - \beta) \right) \\
                 =
                   &\,\,\underbrace{(x + (\alpha_1 + \varepsilon d_1 - \beta) (\alpha_2 + \varepsilon d_2 - \beta))}_{I = \{1,2\}} \\
                   &\,\,\underbrace{(x + (\alpha_1 + \varepsilon d_1 - \beta) (\alpha_3 + \varepsilon d_3 - \beta))}_{I = \{1,3\}} \\
                   &\,\,\underbrace{(x + (\alpha_2 + \varepsilon d_2 - \beta) (\alpha_3 + \varepsilon d_3 - \beta))}_{I = \{2,3\}}.
      \end{align*}
      Note that each of the three terms in $h^=_{2,d}$ is labelled with the corresponding $I$.
      For each $I$, we compute the corresponding sets $I_-$, $I_0$, and $I_+$.

      \begin{center}
        \begin{tabular}{|c|c|c|c|}
          \hline
          $I$&$I_-$&$I_0$&$I_+$ \\
          \hline
          $\{1,2\}$ & $\emptyset$ & $\{1,2\}$ & $\emptyset$\\
          $\{1,3\}$ & $\emptyset$ & $\{1\}$ & $\{3\}$\\
          $\{2,3\}$ & $\emptyset$ & $\{2\}$ & $\{3\}$\\
          \hline
        \end{tabular}
      \end{center}
    Now, consider $d = (d_1, d_2, d_3) \in \{-1, 1\}^3$.
    Recall from the above proof that $S_{I,d} = - (-1)^{\# I_-} (1)^{\# I_+} \prod_{i \in I_0} d_i$.
    We compute
    \begin{alignat*}{5}
      S_{12, d} &= -(-1)^0 \,\, && \,\,(1)^0\,\,&&\,\,d_1d_2\,\, &&= &&-d_1d_2 \\
      S_{13, d} &= -(-1)^0&&(1)^1&&d_1 &&= &&-d_1 \\
      S_{23, d} &= -(-1)^0&&(1)^1&&d_2 &&= &&-d_2.
    \end{alignat*}
    Finally, with this information, we now compute $S_{I,d}$ for each $I$ enumerated above and for each~$d \in \{-1,1\}^3$.
    
\noindent The table below is constructed as follows. 
\begin{itemize}
\item The rows are indexed by the $I$ sets, where the leftmost column shows the decompositions $I_-$, $I_0$, and~$I_+$, the disjoint union of which is $I$.
\item The columns are indexed by~$d \in \{-1,1\}^m$.
\item Each interior cell contains the entries of the~$d$ vector which correspond to the indices in $I_0$. 
\item Immediately below in the same cell is the sign of~$S_{I,d}$. 
\end{itemize}  
  
\noindent For example, the top-left-most cell corresponds to $I_0 \{1,2\}$ and $d = - - - $, and so the cell contains~$d_1, d_2 = -, -$, and beneath it is the sign of~$S_{I,d} = -$ colored red. (Positive $S_{I,d}$ are colored green.)

\medskip
    
\noindent Each interior cell is also color-paired with the unique cell in the same row whose $d$ vector is obtained by reversing the sign of $d_k$, where $k$ is the smallest index in $I_0$, as in the proof.
For example, in the top-left-most cell the first entry $d_1 = -$ is colored cyan and is paired with the corresponding cell in the same row which has $d_1 = +$.

    \begin{center}
      \begin{tabular}{|c|c|c|c|c|c|c|c|c|}
        \hline
        $I \setminus d$ & $ - - - $& $- - +$&$- + -$& $- + +$&$+ - - $&$+ - +$& $+ + -$& $+ + +$ \\ \hline
        $
        \begin{array}{c}
          I_- = \emptyset \\
          I_0 = 12 \\
          I_+ = \emptyset
        \end{array}
        $ &
            $\begin{array}{c}
             d_1, d_2 = {\color{cyan}-}, - \\ S_{I,d} = {\color{red} - } 
            \end{array}$ & 
            $\begin{array}{c}
             {\color{blue}-}, - \\ {\color{red} - } 
            \end{array}$ & 
            $\begin{array}{c}
             {\color{red}-}, + \\ {\color{green} + } 
            \end{array}$ & 
            $\begin{array}{c}
             {\color{magenta}-}, + \\ {\color{green} + } 
            \end{array}$ & 
            $\begin{array}{c}
             {\color{cyan}+}, - \\ {\color{green} + } 
            \end{array}$ & 
            $\begin{array}{c}
             {\color{blue}+}, - \\ {\color{green} + } 
            \end{array}$ & 
            $\begin{array}{c}
             {\color{red}+}, + \\ {\color{red} - } 
            \end{array}$ & 
            $\begin{array}{c}
             {\color{magenta} + }, + \\ {\color{red} - } 
            \end{array}$ 
        \\ \hline
        $
        \begin{array}{c}
          I_- = \emptyset \\
          I_0 = 1 \\
          I_+ = 3 
        \end{array}
        $ &
            $\begin{array}{c}
             d_1 = {\color{cyan}-} \\ S_{I,d} = {\color{green} + } 
            \end{array}$ & 
            $\begin{array}{c}
             {\color{blue}-} \\ {\color{green} + } 
            \end{array}$ & 
            $\begin{array}{c}
             {\color{red}-} \\ {\color{green} + } 
            \end{array}$ & 
            $\begin{array}{c}
             {\color{magenta}-} \\ {\color{green} + } 
            \end{array}$ & 
            $\begin{array}{c}
             {\color{cyan}+} \\ {\color{red} - } 
            \end{array}$ & 
            $\begin{array}{c}
             {\color{blue}+} \\ {\color{red} - } 
            \end{array}$ & 
            $\begin{array}{c}
             {\color{red}+} \\ {\color{red} - } 
            \end{array}$ & 
            $\begin{array}{c}
             {\color{magenta} + }\\ {\color{red} - } 
            \end{array}$ 
        \\ \hline
        $
        \begin{array}{c}
          I_- = \emptyset \\
          I_0 = 2 \\
          I_+ = 3 
        \end{array}
        $ &
            $\begin{array}{c}
             d_2 = {\color{cyan}-} \\ S_{I,d} = {\color{green} + } 
            \end{array}$ & 
            $\begin{array}{c}
             {\color{blue}-} \\ {\color{green} + } 
            \end{array}$ & 
            $\begin{array}{c}
             {\color{cyan}+} \\ {\color{red} - } 
            \end{array}$ & 
            $\begin{array}{c}
             {\color{blue}+} \\ {\color{red} - } 
            \end{array}$ & 
            $\begin{array}{c}
             {\color{red}-} \\ {\color{green} + } 
            \end{array}$ & 
            $\begin{array}{c}
             {\color{magenta}-} \\ {\color{green} + } 
            \end{array}$ & 
            $\begin{array}{c}
             {\color{red}+} \\ {\color{red} - } 
            \end{array}$ & 
            $\begin{array}{c}
             {\color{magenta} + }\\ {\color{red} - } 
            \end{array}$ 
        \\ \hline
      \end{tabular}
    \end{center}
    
    The key takeaway of the above table is as follows. Observe that in each row, exactly half the cells have a green plus in the bottom entry (the $S_{I,d}$ value) and half have a red minus.
    This shows that $$\#\{(I,d) : S_{I,d} = 1 \} \;\;\;\;=\;\;\;\; \# \{(I,d) : S_{I,d} = -1\}.$$
    Further, in each row, there is exactly one cyan minus and one cyan plus, and respectively for blue, red, and magenta.
    Each colored pair represents the map $d \mapsto d'$ discussed in the proof, which is obtained by reversing the sign of the entry of $d$ corresponding to the smallest index in $I_0$.

    Finally, observe that summing over all $(I,d)$ corresponds to simply summing over the entire table.
    If we sum down the rows first (i.e., summing over $I$ with $d$ held constant first), it is unclear what the pattern is, if any. However, if we instead sum across the columns (i.e., summing over $d$ with $I$ held constant), it is easy to see that exactly half the cells have $S_{I,d} = 1$ and the other half have~$S_{I,d} = -1$. This corresponds exactly with switching the order of summation in the previous proof.
    \exend
  \end{example}
    
Finally, we build back up from the $n=1$ case to arbitrary $n$.
    \begin{lemma}
      \label{lem:combine-sym}
      Let $F$ and $G$ be arbitrary real symmetric matrices.
      Then
      \begin{align*}
        \sum_{j=1}^{n} \Cy \Ay (F, [\beta_j]) = \Cy \Ay(F,G).
      \end{align*}
    \end{lemma}
    \begin{proof}
      Recall that
      \begin{align*}
        (\Ay(F, [\beta_j]))_r = \overline{v}(D_{r, j}) 
      \end{align*}
      where 
      \begin{align*}
        D_{r,j} &= \prod_{I \subset [m], \#I = r} \left( x + \prod_{i \in I}^{} (\alpha_i - \beta_j) \right).
      \end{align*}
      Note that we also have
      \[
        D_{r} 
        \;\;=\;\; 
        \prod_{\substack{I \subset [m], \,\,\#I = r \\ j \in [n]}} \left( x + \prod_{i \in I}^{} (\alpha_i - \beta_j) \right)         \;\;=\;\; 
        \prod_{j \in [n]} D_{r,j}.
      \]
      Then
      \begin{alignat*}{2}
        \sum_{j=1}^{n} (\Ay(F, [\beta_j]))_r &= \sum_{j=1}^{n} \overline{v}(D_{r,j}) \\
                                      &= \sum_{j=1}^{n} v(D_{r,j}) + \frac{\tdeg (D_{r,j})}{2} \\
                                      &= \sum_{j=1}^{n} v(D_{r,j}) + \frac12 \sum_{j=1}^{n}\tdeg D_{r,j} \qquad  \\
                                      &= v(D_r) + \frac{\tdeg D_r}{2} && \text{since $D_r = \prod_{j=1}^{n} D_{r,j}$} \\
                                      &= \overline{v}(D_r),
      \end{alignat*}
      which is exactly the $r$-th element of $\Ay(F,G)$.
      Therefore, we have
      \[
        \sum_{j=1}^{n} \Cy \Ay (F, [\beta_j]) \;\;\;=\;\;\;
        \Cy \sum_{j=1}^{n} \Ay (F, [\beta_j]) \;\;\;=\;\;\;
         \Cy \Ay (F, [\beta_j]).
      \]
      Thus the proof is complete.
    \end{proof}

  \noindent With that, we are now ready to prove our main theorem in full generality.
  \begin{proof}[Proof of Theorem~\ref{thm:main}]
    Let $F$~and~$G$ be arbitrary real symmetric matrices.
    Then

\[
  \EC(F,G) \underset{\rm Lemma~\ref{lem:decomp}}{\hspace{2em}=\hspace{2em}}  
  \sum_{j=1}^{n} \EC(F, [\beta_j])
  \underset{\rm Lemma~\ref{lem:single-sym}}{\hspace{2em}=\hspace{2em}} 
  \sum_{j=1}^{n} \Cy\,\,\Ay(F, [\beta_j]) 
  \underset{\rm Lemma~\ref{lem:combine-sym}}{\hspace{2em}=\hspace{2em}} 
  \Cy \Ay(F, G).
\]
\end{proof}


\section{Algorithms}
\label{sec:algorithms}
In this section, we render the main result (Theorem~\ref{thm:main}) into an algorithm for those  who are  interested in the
implementation. 
\begin{algorithm}
[Main: Condition for EC]\ \label{alg:main}

\begin{itemize}
\item[In: ] $F \in \mathbb{R}[p]^{m \times m}$ and  $G \in \mathbb{R}[p]^{n \times n}$, symmetric, where $p$ is a finite list of parameters

  $c \in \mathbb{R}^m$, eigenvalue configuration

\item[Out:] $P,$ quantifier-free condition on $p$ such that $c=\EC\left(  F,G\right)  $
\end{itemize}

\begin{enumerate}
\item For $r=1,\ldots,m$ do

\begin{enumerate}

\item 
$Y_{r} \longleftarrow \Big\{  \left(  i_{1},\ldots,i_{r},j\right) \,\, : \,\,1\leq i_{1}%
<\cdots<i\,_{r}\leq m\ \ \wedge\ \ 1\leq j\leq n\Big\}$

\item $\displaystyle h_{r}\longleftarrow \prod\limits_{\left(  i_{1},\ldots,i_{r},j\right)
\in Y_{r}}\left(  x+\prod_{p=1}^{r}\left(  \alpha_{i_{p}}-\beta_{j}\right)
\right) \hspace{14.83em} \in \mathbb{Z}[\alpha, \beta][x]$
\item $u_{r}\longleftarrow\operatorname*{FTSP}(h_r, \, [  \alpha_1, \dots, \alpha_m  ], \, [\beta_1, \dots, \beta_n])  \hspace{13.9em} \in \mathbb{Z}[\gamma, \delta][x]$

\item $d_r \longleftarrow u_r\Big(\left[\operatorname*{coeff}_{z^0}(\det(zI_m + F)), \dots, \operatorname*{coeff}_{z^{m-1}}(\det(xI_m + F))\right],$

  $\phantom{eeeeeeeeee} \left[\operatorname*{coeff}_{ z^0}(\det(xI_n + G)), \dots, \operatorname*{coeff}_{z^{n-1}}(\det(xI_n + G))\right], \,\, x\Big) \hspace{3.55em} \in \mathbb{Z}[p][x]$
\end{enumerate}

\item $T \longleftarrow$ matrix in $\mathbb{Z}^{m \times m}$ where $\displaystyle T_{r,s} = \sum_{t=1}^{m} \binom{m-t}{r-t} (-2)^{t-1}
\binom{s}{t}$ 
\item $y \longleftarrow T c$

\item $\displaystyle P\longleftarrow \bigwedge_{1 \le r \le m} \overline{v}(d_r) = y_r$

\item Return $P$
\end{enumerate}
\end{algorithm}

\noindent The above algorithm calls a sub-algorithm called  FTSP, which stands for Fundamental Theorem of Symmetric Polynomials. We provide here only the input/output specification.      

\begin{algorithm}[Sub: FTSP] \ \label{alg:ftsp}
\begin{itemize}
\item[In :]  $h \in \mathbb{R}[\alpha, \beta][x]$, a polynomial symmetric in variables $\alpha = (\alpha_1, \dots, \alpha_m)$ and $\beta = (\beta_1, \dots, \beta_n)$

\item[Out:] $u \in \mathbb{R}[\gamma, \delta][x]$ such that $h = u\big(\left[e_1(\alpha), \dots, e_m(\alpha)\right], \left[e_1(\beta), \dots, e_n(\beta)\right], x \big)$
\end{itemize}
\end{algorithm}

\noindent  There are many algorithms for FTSP in the literature, for instance~\cite{St93}.  




\section{Conclusion}
\label{sec:conclusion}
In this section, we summarize the contribution of this paper and discuss future directions.
\paragraph{Summary:}
In this paper, we gave an algorithm which solves the following problem: given parametric real symmetric matrices $F$ and $G$ and an eigenvalue configuration $c$, give a condition on the parameters so that $\EC(F,G) = c$.
To accomplish this, we gave a natural definition of eigenvalue configuration and gave an invertible combinatorial transformation to relate it to a set of real root counting problems of certain symmetric polynomials constructed from the eigenvalues. We then applied the Fundamental Theorem of Symmetric Polynomials to express those symmetric polynomials in terms of the parameters of the matrices~$F$ and $G$ to obtain a quantifier-free condition.

\paragraph{Future directions:}
We are investigating ways to prune and/or simplify the output condition. More precisely, recall that the output of Algorithm \ref{alg:main} can be written as a disjunction of conjunctions. In many cases, some of these conjunctive branches might be unsatisfiable by any choice of parameters, and therefore always evaluate to ``false.'' Hence, they could safely be eliminated from the output condition. Ideally, we would like to systematically remove these branches from the output condition, or avoid computing them entirely.

\bibliographystyle{plain}
\bibliography{../../../reference/refs}

\bigskip\noindent\textbf{Acknowledgements.} Hoon Hong was partially supported
by US National Science Foundation NSF-CCF-2212461. Daniel Profili's PhD work was supported by US National Science Foundation NSF-CCF-2212461. Rafael Sendra was partially
supported by the research project PID2020-113192GB-I00 (Mathematical
Visualization: Foundations, Algorithms and Applications) from the Spanish MICINN.

\end{document}